\newtheorem{theorem}{Theorem}[section]
\newtheorem{proposition}[theorem]{Proposition}
\newtheorem{corollary}[theorem]{Corollary}
\theoremstyle{definition}
\newtheorem{definition}[theorem]{Definition}
\theoremstyle{remark}
\newtheorem{remark}[theorem]{Remark}
\numberwithin{equation}{section}
\title[Weak solutions to  Hessian type equations  ]{Weak solutions to Hessian type equations on compact Riemannian manifolds}
\author{Zhenan Sui}
\address{Institute for Advanced Study in Mathematics of HIT, Harbin Institute of Technology, Harbin, China}
\email{sui.4@osu.edu}
\author{Wei Sun}
\address{Institute of Mathematical Sciences, ShanghaiTech University, Shanghai, China}
\email{sunwei@shanghaitech.edu.cn}
\begin{document}
\setlength{\baselineskip}{1.2\baselineskip}

\begin{abstract}

In this paper, we shall study the existence of weak solutions to Hessian type equations on  compact Riemannian manifolds without boundary.

\end{abstract}



\maketitle


\section {Introduction}
\label{introduction}

Let $(M,g)$ be a smooth compact Riemannian manifold without boundary of dimension $n \geq 2$, and $\chi$ be a smooth twice-covariant symmetric tensor field. In this paper, we shall study the weak solutions to the following equation:
\begin{equation}
\label{equation-hessian}
	\mathfrak{F} \left(\chi + \nabla^2 \varphi\right):= \mathfrak{f} \left(\bm{\lambda} (\chi + \nabla^2 \varphi)\right) = e^{f} , \qquad \sup_M \varphi = 0 ,
\end{equation}
where $\nabla^2 \varphi$ denotes the Hessian of $\varphi$ if $\varphi \in C^2 (M)$, $\bm{\lambda} (\chi + \nabla^2 \varphi) = (\lambda_1, \ldots, \lambda_n)$ is the eigenvalue set of $\chi + \nabla^2 \varphi$ with respect to $g$, and $e^f$ is a prescribed function on $M$.

Classical solutions to this type of equations on compact Riemannian manifolds have been studied extensively by Li~\cite{Li1990}, Urbas~\cite{Urbas}, Guan~\cite{Guan2014, Guan2023} and Sz\`ekelyhidi~\cite{Szekelyhidi}. To our best of knowledge, we do not find any work on weak solutions to fully nonlinear equations in the form \eqref{equation-hessian} on Riemannian manifolds.
In this paper, we are interested in the
existence of
weak solutions to equation \eqref{equation-hessian} on compact Riemannian manifolds without boundary, by establishing $L^{\infty}$-estimate and stability estimate. This study is inspired by the
works of ~\cite{GPT2021, GP2022, SuiSun2023, Sun202305MA, Sun202305} for complex Hessian type equations on compact K\"ahler manifolds or Hermitian manifolds, where the estimates and existence results are discussed under very weak regularity of $e^f$.

Similar to the assumptions in Guo-Phong-Tong \cite{GPT2021} and Guo-Phong \cite{GP2022}, $\mathfrak{f}$ is assumed to satisfy the following structural conditions throughout this paper:
\begin{enumerate}
\item $\mathfrak{f} (\bm{\lambda})$ is a function in $C(\bar \Gamma) \cap C^\infty (\Gamma) $ which vanishes on the boundary $\partial \Gamma$,
where $\Gamma \subset \Gamma^1 := \{\bm{\lambda} \in \mathbb{R}^n \,| \, \lambda_1 + \cdots + \lambda_n > 0  \}$ is an open symmetric convex domain
;

\item $\mathfrak{f} (\bm{\lambda})$ is invariant under permutations of the components of $\bm{\lambda}$;

\item $\mathfrak{f} (\bm{\lambda})$ is elliptic, that is,
\begin{equation}
\frac{\partial \mathfrak{f}}{\partial \lambda_i} (\bm{\lambda}) > 0 , \qquad \text {for any } 1 \leq i \leq n \text{ and } \bm{\lambda} \in \Gamma;
\end{equation}

\item there is a constant $c > 0$ such that for any $\bm{\lambda} \in \Gamma$,
\begin{equation}
\label{condition-1-3}
	\prod^n_{i = 1} \frac{\partial \mathfrak{f}}{\partial \lambda_i} (\bm{\lambda}) \geq c ;
\end{equation}

\item $\mathfrak{f} (\bm{\lambda})$ is concave.

\end{enumerate}
These conditions are quite fundamental and widely adopted in the study of fully nonlinear elliptic equations.
We shall discuss these conditions in more detail in Section~\ref{preliminary}.
Without loss of generality, we may further assume that  there is a constant $\sigma > 0$ such that $\bm{\lambda} (\chi - \sigma g) \in \Gamma$ and $\bm{1} := (1,\cdots,1) \in \Gamma$.

In this paper, we shall study the weak solutions to Equation~\eqref{equation-hessian} with the help of $L^{\infty}$ estimate and stability estimate.
Similar estimates have been derived in \cite{GPT2021, GP2022, SuiSun2023, Sun202305MA, Sun202305}, where the authors compared complex Hessian type equations with complex Monge-Amp\`ere equations followed by De Giorgi iteration argument under very weak regularity of prescribed functions on the right hand side of the equations.
Different from the complex case, a difficulty for real equation \eqref{equation-hessian} is that we have to deal with the gradient term in $\nabla^2 \varphi$ within local coordinate charts.
We shall directly adopt a version of Alexandrov-Bakelman-Pucci maximum principle due to Sz\'ekelyhidi \cite{Szekelyhidi}.
Indeed, our argument in this paper applies to similar problems on compact almost complex manifolds.

Our first  result is on  $L^{\infty}$-estimate for Equation \eqref{equation-hessian}.
\begin{theorem}
\label{Theorem-1-1}
Suppose that $f$ is continuous and $\int_M e^{nf} (1 + n |f|)^p d vol < +\infty $ with $p > 0$.
Let $\varphi$ be a $C^2$ solution to Equation~\eqref{equation-hessian}.
Then there is a constant $C$ depending on $\int_M e^{nf} (1 + n |f|)^p d vol $ and geometric data such that $- \varphi < C$.
In particular, there is a constant $C$ depending on $\Vert e^f \Vert_{L^{nq}}$ with $q > 1$ and geometric data such that $- \varphi < C$.
\end{theorem}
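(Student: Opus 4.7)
The proof would follow the De Giorgi iteration strategy of Guo--Phong--Tong \cite{GPT2021}, adapted to the real Riemannian setting by a direct application of Sz\'ekelyhidi's ABP maximum principle \cite{Szekelyhidi} on coordinate balls (in place of maximum-principle comparison against an auxiliary real Monge--Amp\`ere equation, which is delicate in the Riemannian setting). Set $L := -\inf_M \varphi$; we may assume $L > 0$ and wish to bound $L$ from above in terms of the data.

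\textbf{Steps 1--2 (ABP and pointwise determinant inequality).} Fix a minimum point $x_0$ and pass to a normal coordinate chart on $B_r(x_0)$. Form an auxiliary function $w(x) = \varphi(x) - \varphi(x_0) + \frac{\sigma}{2}|x|^2$, with constants chosen so that $w$ has an interior minimum strictly below its boundary values. Sz\'ekelyhidi's ABP then yields
\[ \bigl(\text{interior defect of }w\bigr)^n \;\leq\; C \int_{E} \det(D^2 w)\, dx, \]
where the contact set $E$ consists of points at which $D^2 w \geq 0$ and, crucially, $|\nabla w|$ is quantitatively small. On $E$, combining the equation $\mathfrak{f}(\bm{\lambda}(\chi + \nabla^2\varphi)) = e^{f}$ with the structure conditions --- concavity of $\mathfrak{f}$, ellipticity, and $\prod_i \partial_i \mathfrak{f} \geq c > 0$ --- yields a pointwise Harvey--Lawson-type inequality $\det(D^2 w) \leq C e^{nf}$, after absorbing the gradient error using the smallness of $|\nabla w|$ on $E$. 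Applied to sublevel sets $\{\varphi < -s\}$, this gives
\[ (\text{defect at level } s)^n \;\leq\; C \int_{\{\varphi < -s\}} e^{nf}\, dx. \]

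\textbf{Step 3 (De Giorgi iteration).} The Orlicz-type hypothesis enters through a truncation of $f$: splitting $\{f > t(s)\}$ and $\{f \leq t(s)\}$ together with $\int_M e^{nf}(1+n|f|)^p\,d\mathrm{vol}<\infty$ gives a bound of the form
\[ \int_{\{\varphi < -s\}} e^{nf}\, dx \;\leq\; \frac{C}{(\log (1/\phi(s)))^p}, \qquad \phi(s) := \bigl|\{\varphi < -s\}\bigr|. \]
Substituting into the inequality from Steps 1--2 produces a recursive estimate for $\phi$ on a decreasing sequence of levels; the standard De Giorgi lemma then forces $\phi(s_0) = 0$ for some $s_0$ depending only on the Orlicz norm of $e^{nf}$ and the geometric data. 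The $L^{nq}$ version follows from the first part by Young's inequality, since $e^{nf}(1+n|f|)^p \leq C_{p,q}(e^{nqf} + 1)$ for any $q > 1$ and $p > 0$.

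\textbf{Main obstacle.} The central technical point is the pointwise determinant bound in Steps 1--2. On a Riemannian manifold the covariant and Euclidean Hessians of $\varphi$ differ by Christoffel terms involving $\nabla\varphi$, which a priori cannot be controlled. Sz\'ekelyhidi's formulation of ABP is tailored precisely to this issue: its contact set is built to enforce gradient smallness, which is what allows the Harvey--Lawson majorization to go through. Coupling this ABP with De Giorgi iteration --- the latter propagating a sublevel-set bound into a pointwise bound --- is the essential new ingredient compared to the complex case treated in \cite{GPT2021, GP2022}.
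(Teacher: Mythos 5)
Your plan and the paper's proof diverge at a decisive point, and the divergence hides a genuine gap.

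The paper does \emph{not} use De Giorgi iteration at all. Its proof is a one-shot argument: apply Sz\'ekelyhidi's ABP exactly once, at the global minimum $\bm{x}_0$ of $\varphi$, with $u(\bm{x})=\varphi(\bm{x})-\varphi(\bm{0})+\epsilon|\bm{x}|^2$. On the contact set $P$ one has $|Du|<\epsilon/2$ (so the Christoffel correction is $O(\epsilon)$ and can be absorbed by shrinking $\epsilon$ relative to $\sigma$), and $0\leq D^2u\leq\sigma g+\nabla^2\varphi$, which together with $\mathfrak{F}^{ij}(X)(\sigma g+\nabla^2\varphi)_{ij}\leq e^f$ and the AM--GM estimate from condition~\eqref{condition-1-3} gives $\det D^2 u\leq C e^{nf}$. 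This yields the single inequality $c_0\epsilon^n\leq C'\int_P e^{nf}\,d\,vol$. The key next move is not iteration but an amplification: $P\subset\{\varphi\leq\inf_M\varphi+\epsilon/2\}$, so one can insert the factor $\ln^p(-\varphi+1)/\ln^p(-\inf_M\varphi+1-\epsilon/2)\geq 1$ under the integral and then split off the $\ln^p$ weight by the generalized Young inequality $\big(\tfrac{1}{2}\ln(-\varphi+1)\big)^p e^{nf}\leq e^{nf}(1+n|f|)^p+C(p)(-\varphi+1)$. After invoking a bound on $\|\varphi\|_{L^1}$ (standard from $\Delta\varphi\geq -S_1(\bm{\lambda}(\chi))$, $\sup\varphi=0$ and Green's representation), this produces $\ln^p(-\inf_M\varphi+1-\epsilon/2)\leq C$, hence the $L^\infty$ bound. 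The $L^{nq}$ case is handled the same way with H\"older in place of Young.

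The gap in your proposal is the step ``Applied to sublevel sets $\{\varphi<-s\}$, this gives $(\text{defect at level }s)^n\leq C\int_{\{\varphi<-s\}}e^{nf}$.'' Sz\'ekelyhidi's ABP, as you set it up, is applied at a single point (the global minimum) and produces exactly one inequality, with $P$ lying inside one near-bottom sublevel set. It does not, by itself, produce a one-parameter family of inequalities indexed by the level $s$, which is precisely what De Giorgi iteration needs. In Guo--Phong--Tong that family comes from a genuinely separate device: solving an auxiliary (complex) Monge--Amp\`ere equation with right-hand side cut off at level $s$, and comparing. You explicitly say you want to replace that device by ABP, but ABP alone does not supply the recursion $\phi(s+t)\lesssim G(\phi(s))$, nor the companion Chebyshev/energy inequality one also needs to close the iteration. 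Your truncation step for the Orlicz bound (splitting $\{f>t\}$ and $\{f\leq t\}$) is fine in isolation, as is the remark that the $L^{nq}$ case follows from the Orlicz case pointwise; but the scaffolding those steps are meant to hang on is missing. The lesson from the paper is that no iteration is needed: the single ABP inequality already localizes $P$ near the bottom of $\varphi$, and the Young (or H\"older) trick converts that directly into a bound on $-\inf_M\varphi$.
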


The second result is a version of stability estimate, which can help us to construct weak solutions.
\begin{theorem}
\label{theorem-stability-1}
Assume that there are $C^2$ functions $\varphi_1$ and $\varphi_2$ such that
\begin{equation*}
\mathfrak{F} (\chi + \nabla^2 \varphi_1) = e^f , \qquad \sup_M \varphi_1 = 0 ,
\end{equation*}
and
\begin{equation*}
\mathfrak{F} (\chi + \nabla^2 \varphi_2) = e^{\tilde f} , \qquad \sup_M \varphi_2 = 0 .
\end{equation*}
Suppose that for $q > 1$, there is a constant $K > 0$ 
so that
\begin{equation*}
\Vert e^{nf} \Vert_{L^q} \leq K
.
\end{equation*}
Then for any $p > 0$, there exists a constant $C > 0$  such that
\begin{equation*}
	 \sup_M (\varphi_2 -\varphi_1 ) \leq C \left\Vert (\varphi_2 -\varphi_1 )^+ \right\Vert_{L^p}^{\frac{p (q - 1)}{n q + p (q - 1)}} .
\end{equation*}

\end{theorem}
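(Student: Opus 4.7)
\medskip

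\noindent\textbf{Proof plan.} The plan is to adapt the complex-Hessian stability framework of \cite{GPT2021, GP2022, SuiSun2023, Sun202305MA} to the real setting, using Sz\'ekelyhidi's ABP inequality in place of the complex maximum-principle/integration-by-parts step so as to accommodate the gradient terms arising for the real Hessian in local coordinates. The proof has three stages: (i) construct an auxiliary real Monge--Amp\`ere potential adapted to the super-level set $\{u > k\}$, where $u := \varphi_2 - \varphi_1$; (ii) an ABP comparison yielding a level-set estimate; (iii) a one-step optimization in the threshold $k$.

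Set $s := \sup_M u$ and assume $s > 0$ (else there is nothing to prove). For each $k \in (0,s)$ put $\Omega_k := \{u > k\}$ and construct a function $\psi_k \in C^2(M)$ with $\chi + \nabla^2 \psi_k > 0$ solving (after a mild regularization of the indicator)
\begin{equation*}
\det_g(\chi + \nabla^2 \psi_k) = a_k\, \mathbf{1}_{\Omega_k}\, e^{nf} + b_k,\qquad \sup_M \psi_k = 0,
\end{equation*}
with the normalization chosen so that the equation is solvable and $\|\psi_k\|_{L^\infty}$ is bounded uniformly in $k$ (applying Theorem~\ref{Theorem-1-1} to this Monge--Amp\`ere proxy, which satisfies (1)--(5) with $c = 1$). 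H\"older's inequality together with $\|e^{nf}\|_{L^q} \leq K$ yields the key lower bound $a_k \geq c\,|\Omega_k|^{-(q-1)/q}$.

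For $\varepsilon > 0$ to be optimized, consider $\Phi := u - k - \varepsilon(-\psi_k + \Lambda)^{n/(n+1)}$ with $\Lambda$ chosen so the base is positive. Apply Sz\'ekelyhidi's ABP to $\Phi$: at a near-maximum point the Hessian comparison $\nabla^2 u \leq \varepsilon\, \nabla^2(-\psi_k + \Lambda)^{n/(n+1)}$ combines with monotonicity and concavity of $\mathfrak{F}$ to produce
\begin{equation*}
e^{\tilde f} - e^f \;\leq\; C\varepsilon\,(-\psi_k+\Lambda)^{-1/(n+1)}\Bigl[F^{ij}\chi_{ij} - F^{ij}(\chi + \nabla^2\psi_k)_{ij}\Bigr],
\end{equation*}
where $F^{ij}$ is the linearization of $\mathfrak{F}$ at $\chi + \nabla^2 \varphi_1$. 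The matrix AM--GM inequality, the structural bound $\det F^{ij} \geq c$ of condition (4), and the Monge--Amp\`ere lower bound $\det(\chi + \nabla^2 \psi_k) \geq a_k e^{nf}$ on $\Omega_k$ push the bracket below $-C\, a_k^{1/n} e^f$, and a short rearrangement gives the level-set estimate
\begin{equation*}
s - k \;\leq\; C\,a_k^{-1/n} \;\leq\; C\,|\Omega_k|^{(q-1)/(nq)}.
\end{equation*}
Combining with the Chebyshev--Markov bound $|\Omega_k| \leq k^{-p} \|u^+\|_{L^p}^p$ yields $(s-k)\,k^{p(q-1)/(nq)} \leq C\,\|u^+\|_{L^p}^{p(q-1)/(nq)}$; maximizing the left-hand side over $k \in (0,s)$, with optimum at $k = s p(q-1)/(nq + p(q-1))$, produces the advertised exponent $p(q-1)/(nq + p(q-1))$.

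The main anticipated obstacle is the existence and uniform $L^\infty$ control of the auxiliary real Monge--Amp\`ere potential $\psi_k$ on a general compact Riemannian manifold: unlike the K\"ahler case, there is no cohomological total-mass constraint, so one must regularize $\mathbf{1}_{\Omega_k}$ and invoke Urbas-type solvability (or a weak Alexandrov construction) to produce $\psi_k$. A secondary technical point is the careful application of Sz\'ekelyhidi's (inherently local) ABP on a closed manifold, which should be achievable by working in a small geodesic ball around the maximum of $\Phi$ with a suitable cutoff.
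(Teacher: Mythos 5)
The paper takes a genuinely different and considerably more direct route than the one you propose. Rather than importing the Guo--Phong--Tong machinery of auxiliary Monge--Amp\`ere potentials adapted to super-level sets, the paper exploits a convex-combination identity: writing
\begin{equation*}
\chi + \nabla^2\varphi_1 \;=\; (1-r)\bigl(\chi + \nabla^2\varphi_2\bigr) + r\Bigl(\chi + \tfrac{1}{r}\nabla^2\bigl(\varphi_1 - (1-r)\varphi_2\bigr)\Bigr),
\end{equation*}
concavity of $\mathfrak{F}$ together with the structural lower bound \eqref{inequality-2-6} gives, at any point where $\sigma g + \tfrac{1}{r}\nabla^2(\varphi_1 - (1-r)\varphi_2) \geq 0$, the pointwise inequality $e^f \geq n r c^{1/n}\, S_n^{1/n}\bigl(\bm\lambda(\sigma g + \tfrac1r\nabla^2(\varphi_1-(1-r)\varphi_2))\bigr)$. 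Sz\'ekelyhidi's ABP proposition is then applied \emph{directly} to $v := \tfrac1r\varphi_1 - \tfrac{1-r}{r}\varphi_2 + \epsilon|\bm x|^2$ centered at the minimum of $\tfrac1r\varphi_1 - \tfrac{1-r}{r}\varphi_2$, yielding $c_0\epsilon^n \leq \tfrac{C}{r^n}\int_P e^{nf}\,dvol$, and a single application of H\"older plus the choice $r = \|(\varphi_2-\varphi_1)^+\|_{L^p}^{p(q-1)/(nq+p(q-1))}$ closes the estimate. No auxiliary potential, no level-set iteration, no optimization over a threshold $k$; only the elementary case split $r<\tfrac12$, $r=0$, $r\geq\tfrac12$.

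Your proposal, by contrast, reproduces the complex-Hessian scheme wholesale, and the obstacle you flag at the end is in fact fatal in this generality, not merely technical. On a closed Riemannian manifold there is no cohomological total-mass identity, so the Monge--Amp\`ere proxy $\det_g(\chi + \nabla^2\psi_k) = a_k\mathbf 1_{\Omega_k}e^{nf} + b_k$ with $\sup\psi_k = 0$ is only solvable up to an \emph{implicitly determined} overall multiplicative constant (this is exactly the point the paper makes when it passes from \eqref{equation-hessian} to \eqref{equation-true} and must prove uniqueness and two-sided bounds for $e^b$). With two normalization parameters $a_k, b_k$ and only one implicit solvability constraint, you have one degree of freedom; but the key lower bound $a_k \gtrsim |\Omega_k|^{-(q-1)/q}$ in the K\"ahler case is extracted from the \emph{explicit} normalization $a_k\int_{\Omega_k}e^{nf}\,\omega^n = \int_X\omega_\chi^n$, which has no Riemannian analogue. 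Without an explicit formula tying $a_k$ to $\int_{\Omega_k}e^{nf}\,dvol$, the H\"older step that produces the exponent $(q-1)/(nq)$ does not go through. There is also a subtler circularity risk: in this paper the existence theory for the auxiliary equation with merely $L^{nq}$ data rests on the very stability estimate you are trying to prove (Theorem~\ref{theorem-stability-1} is one of the two inputs to Theorem~\ref{main-theorem}), so you would have to be careful to use only smooth mollified data and the independently-proved $L^\infty$ estimate of Theorem~\ref{Theorem-1-1}. All of this can likely be rescued with substantial extra work, but the paper's convex-combination argument sidesteps every one of these difficulties: the role of the auxiliary potential is played by the explicit function $\tfrac1r\varphi_1 - \tfrac{1-r}r\varphi_2$, whose existence is free and whose ABP-admissibility at the near-minimum is immediate from the Christoffel-term absorption already carried out in Section~\ref{L-estimate}.
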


Since $M$ is a manifold without boundary, Equation~\eqref{equation-hessian} might not admit any solution even if $f$ is smooth, in views of the maximum principle.
Actually, we plan to find a solution pair $(\varphi, b)$ to the following equation
\begin{equation}
\label{equation-true}
	\mathfrak{F} \left(\chi + \nabla^2 \varphi\right)
	= e^b e^{f} , \qquad \sup_M \varphi = 0 ,
\end{equation}
where $b$ is a real number to be determined.
The main result of this paper is the following existence theorem.
\begin{theorem}
\label{main-theorem}
Let $(M, g)$ be a smooth compact Riemannian manifold without boundary of dimension $n \geq 2$, and $\chi$ be a smooth twice-covariant symmetric tensor field.
Suppose that $e^f \in L^{nq} (M)$ with $q > 1$ and $\int_M e^f d vol > 0$.
Then Equation~\eqref{equation-true} admits a weak $C^0$ solution pair $(\varphi,b)$ with respect to $L^{nq}$ norm,
where the constant $b$ is uniquely determined by $e^f$.

In particular,  Equation~\eqref{equation-true} admits a viscosity solution pair $(\varphi,b)$, if $e^f \in C (M) $ and $e^f \not\equiv 0$.

\end{theorem}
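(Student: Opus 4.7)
The plan is to realize the solution pair of \eqref{equation-true} as a uniform limit of classical smooth solutions to regularized problems, with Theorem~\ref{Theorem-1-1} supplying the uniform $L^{\infty}$-estimate and Theorem~\ref{theorem-stability-1} upgrading convergence in $L^{p}$ of the approximants to convergence in $C^{0}$. First I would regularize the data: since $e^{f}\in L^{nq}(M)$, $e^{f}\ge 0$, and $\int_{M} e^{f}\,dvol>0$, choose $f_{k}\in C^{\infty}(M)$ with $e^{f_{k}}>0$ everywhere, $e^{f_{k}}\to e^{f}$ in $L^{nq}(M)$, and $\int_{M} e^{f_{k}}\,dvol\to\int_{M} e^{f}\,dvol>0$. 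For smooth data the classical existence theory on a closed manifold (Guan~\cite{Guan2014, Guan2023}, Sz\'ekelyhidi~\cite{Szekelyhidi}) produces, for each $k$, a unique smooth pair $(\varphi_{k},b_{k})$ satisfying
\begin{equation*}
\mathfrak{F}(\chi+\nabla^{2}\varphi_{k})=e^{b_{k}}e^{f_{k}},\qquad \sup_{M}\varphi_{k}=0.
\end{equation*}

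The main technical step, and what I expect to be the main obstacle, is obtaining a uniform two-sided bound on $b_{k}$. Integrating the equation yields
\begin{equation*}
e^{b_{k}}\int_{M} e^{f_{k}}\,dvol\;=\;\int_{M}\mathfrak{F}(\chi+\nabla^{2}\varphi_{k})\,dvol.
\end{equation*}
Concavity of $\mathfrak{f}$ applied at $\chi$, combined with integration by parts twice against the smooth tensor $D\mathfrak{F}(\chi)$ and the elementary a priori bound $\|\varphi_{k}\|_{L^{1}}\le C$ (which follows from $\sup\varphi_{k}=0$, $\Delta\varphi_{k}\ge -\operatorname{tr}\chi$, and a Green-function estimate on the closed manifold), bounds the right-hand side from above. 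For a positive lower bound on the same integral I would exploit the subsolution hypothesis $\bm{\lambda}(\chi-\sigma g)\in\Gamma$: this allows $\mathfrak{F}(\chi+\nabla^{2}\varphi_{k})$ to be compared with $\mathfrak{F}(\chi-\sigma g)$ on a set of positive measure, producing $\int_{M}\mathfrak{F}\ge c_{0}>0$. Combined with $\int_{M} e^{f_{k}}\,dvol\to\int_{M} e^{f}\,dvol>0$, these two inequalities give uniform two-sided bounds on $b_{k}$.

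With $b_{k}$ bounded, $\|e^{b_{k}+f_{k}}\|_{L^{nq}}$ is uniformly bounded, so Theorem~\ref{Theorem-1-1} yields a uniform $L^{\infty}$-bound on $\varphi_{k}$. The inclusion $\Gamma\subset\Gamma^{1}$ gives $\Delta\varphi_{k}\ge-\operatorname{tr}\chi$, and together with $\int_{M}\Delta\varphi_{k}\,dvol=0$ this bounds $\varphi_{k}$ uniformly in $W^{2,1}(M)$. Rellich--Kondrachov then furnishes a subsequence along which $\varphi_{k}\to\varphi$ in $L^{p}(M)$ for every finite $p$ and $b_{k}\to b\in\mathbb{R}$. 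Applying Theorem~\ref{theorem-stability-1} symmetrically to the pairs $(\varphi_{k},\varphi_{\ell})$, whose right-hand sides meet the required $L^{q}$ bound uniformly, yields
\begin{equation*}
\sup_{M}|\varphi_{k}-\varphi_{\ell}|\;\le\;C\,\|\varphi_{k}-\varphi_{\ell}\|_{L^{p}}^{\frac{p(q-1)}{nq+p(q-1)}}\longrightarrow 0,
\end{equation*}
so $\{\varphi_{k}\}$ is Cauchy in $C^{0}(M)$, its limit $\varphi$ lies in $C^{0}(M)$, and $(\varphi,b)$ is the desired weak $C^{0}$ solution with respect to the $L^{nq}$ norm.

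For the viscosity statement, when $e^{f}\in C(M)$ with $e^{f}\not\equiv 0$, I would additionally arrange $f_{k}\to f$ uniformly, so that $e^{b_{k}+f_{k}}\to e^{b+f}$ uniformly; the standard stability of viscosity solutions under uniform convergence of both the solutions and the right-hand sides then shows that $\varphi$ is a viscosity solution of \eqref{equation-true}. Uniqueness of $b$ follows by a comparison argument: if two pairs $(\varphi,b)\ne(\varphi',b')$ coexisted with $b>b'$, then testing the equation at a viscosity maximum of $\varphi-\varphi'$ would force $e^{b}e^{f}\le e^{b'}e^{f}$ on the set $\{e^{f}>0\}$, contradicting $b>b'$.
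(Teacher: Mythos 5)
Your high-level strategy (regularize $e^{f}$ by smooth positive $e^{f_{k}}$, establish uniform two-sided bounds on $b_{k}$, use the $L^{\infty}$-estimate plus compactness plus the stability estimate to extract a $C^{0}$ limit, then treat the viscosity case separately) is precisely the route the paper takes. However, two of your steps contain genuine gaps.

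\textbf{Lower bound on $b_{k}$.} You write that the subsolution hypothesis ``allows $\mathfrak{F}(\chi+\nabla^{2}\varphi_{k})$ to be compared with $\mathfrak{F}(\chi-\sigma g)$ on a set of positive measure, producing $\int_{M}\mathfrak{F}\ge c_{0}>0$,'' but you do not say why such a set of quantifiable positive measure exists, and this is the crux. At the minimum point of $\varphi_{k}$ one has $\nabla^{2}\varphi_{k}\ge 0$ and hence $\mathfrak{F}(\chi+\nabla^{2}\varphi_{k})\ge\mathfrak{F}(\chi)>0$ \emph{at that single point}, but a single point has measure zero, and without further argument the integrand could degenerate on an exceptional set whose complement shrinks to measure zero as $k\to\infty$. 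This is exactly what the Alexandrov--Bakelman--Pucci step is for: Corollary~\ref{corollary-lower-bound} runs the ABP machinery around the minimum of $\varphi_{k}$ and uses the determinant comparison together with \eqref{inequality-3-10} to obtain
\[
c_{0}\epsilon^{n}\le C'(c,g)\,e^{nb_{k}}\!\int_{M}e^{nf_{k}}\,d\,vol,
\]
which is the needed lower bound on $e^{b_{k}}$. Without ABP (or an equivalent measure-theoretic estimate) your lower-bound claim is unsupported.

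\textbf{Uniqueness of $b$.} Your comparison argument evaluates the two equations at a (viscosity) maximum of $\varphi-\varphi'$ to conclude $e^{b}e^{f}\le e^{b'}e^{f}$ there, ``contradicting $b>b'$ on $\{e^{f}>0\}$.'' This is insufficient: the maximum of $\varphi-\varphi'$ may well land in $\{e^{f}=0\}$, where the comparison yields only $0\le 0$ and no contradiction. Moreover, for the weak $C^{0}$ case $e^{f}\in L^{nq}$ is not continuous, so one cannot even invoke the viscosity framework in the first place---the solutions are only defined as uniform limits of smooth approximants, not as viscosity solutions. The paper avoids both problems by proving uniqueness of $b$ at the level of the approximating sequences: it takes two sequences $(\varphi_{i},b_{i},f_{i})$ and $(\tilde\varphi_{i},\tilde b_{i},\tilde f_{i})$ with the same $L^{n}$-limit $e^{f}$, forms the convex combination $\varphi_{i}-(1-r)\tilde\varphi_{i}$ with the particular choice $r=1-e^{b-\tilde b}$, runs the ABP/determinant comparison on this combination, and sends $i\to\infty$ to force $c_{0}\epsilon^{n}\le 0$ if $b<\tilde b$. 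This integral argument is robust to vanishing of $e^{f}$ and to low regularity, whereas your pointwise maximum-principle argument is not.

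The upper bound on $b_{k}$, the $L^{\infty}$ estimate, the $W^{1,2}$ compactness step, and the use of the stability estimate to get uniform convergence are all consistent with the paper (the paper uses \eqref{inequality-2-3} and $\int_{M}\Delta\varphi_{k}=0$ to get the upper bound without needing $\|\varphi_{k}\|_{L^{1}}$, but your variant via integration by parts and a Green-function estimate also works).
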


The definition of weak $C^0$ solution first appeared in Chen~\cite{Chen2000}. We note that the notation of $e^f$ is just to emphasize that this is a nonnegative function in Equation~\eqref{equation-hessian}.
If $e^f \in C(M)$ is not identically $0$, the weak $C^0$ solution with respect to $C^0$ norm in Theorem \ref{main-theorem} turns out to be a viscosity solution to Equation~\eqref{equation-hessian}.
Different from complex equations, real equations such as Equation \eqref{equation-hessian} do not carry the property of invariant integral. Therefore, we need to prove the uniqueness and the bound of $e^b$.

If we further impose some extra conditions, the next result shows the existence of Lipschitz solution.

\begin{theorem} \label{Lipschitz solution}
Under the assumptions of Theorem~\ref{main-theorem}, assume in addition that
\begin{equation}
\label{condition-5-32}
 t \bm{\lambda} \in \Gamma , \qquad \forall \bm{\lambda} \in \Gamma \text{ and } t \geq 1,
\end{equation}
and $e^f$ is locally Lipschitz in an open set $U \subset M$. Then there is a viscosity solution pair $(\varphi,b)$ which is also locally Lipschitz on $U$.
\end{theorem}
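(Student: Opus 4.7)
The plan is an approximation-plus-Bernstein argument. First, I would regularize $e^f$ by convolution in coordinate charts to obtain smooth $e^{f_\epsilon}\in C^\infty(M)$ with $e^{f_\epsilon}\geq \epsilon$ and $e^{f_\epsilon}\to e^f$ in $L^{nq}(M)$, chosen so that on every compact $K$ with $K\subset\subset U$ the Lipschitz seminorm of $e^{f_\epsilon}$ on $K$ remains bounded as $\epsilon\to 0$. For each such smooth strictly positive right-hand side, the classical theory of Sz\'ekelyhidi~\cite{Szekelyhidi} and Guan~\cite{Guan2023} provides a smooth admissible solution pair $(\varphi_\epsilon,b_\epsilon)$ to \eqref{equation-true} with right-hand side $e^{b_\epsilon}e^{f_\epsilon}$. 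Theorem~\ref{Theorem-1-1} yields a uniform $L^\infty$ bound on $\varphi_\epsilon$, and the hypothesis $\int_M e^f \, dvol>0$ together with the argument used in the proof of Theorem~\ref{main-theorem} gives a uniform two-sided bound on $b_\epsilon$.

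The heart of the argument is a uniform interior Lipschitz estimate: for each pair $K\subset\subset K'\subset\subset U$, I want
$$\sup_K |\nabla\varphi_\epsilon| \;\leq\; C\bigl(K,K',\|\varphi_\epsilon\|_{L^\infty(M)},\|e^{f_\epsilon}\|_{\mathrm{Lip}(K')}\bigr),$$
with $C$ independent of $\epsilon$. I would establish this by a Bernstein-type maximum principle argument. Choose a cutoff $\eta\in C^\infty_c(K')$ with $\eta\equiv 1$ on $K$ and set $G=\eta^{2}|\nabla\varphi_\epsilon|^{2}\,e^{-A\varphi_\epsilon}$ for a large constant $A>0$. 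Apply the linearized operator $F^{ij}=\partial \mathfrak{F}/\partial(\nabla^{2}\varphi)_{ij}$ to $\log G$ at an interior maximum point; differentiating the equation once gives $F^{ij}\varphi_{\epsilon,ijk}=\partial_k(e^{b_\epsilon}e^{f_\epsilon})$, so the third-order terms are absorbed into $\|\nabla f_\epsilon\|_{L^\infty(K')}$. The concavity of $\mathfrak{f}$ disposes of the curvature-type cross terms produced when commuting covariant derivatives, while for $A$ sufficiently large $A\,F^{ij}\varphi_{\epsilon,i}\varphi_{\epsilon,j}$ dominates the quartic growth in $|\nabla\varphi_\epsilon|$. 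Closing the inequality requires a uniform lower bound $\sum_i F^{ii}\geq \delta>0$, and this is where condition~\eqref{condition-5-32} enters: stability of $\Gamma$ under upward rescaling, combined with $\bm{1}\in\Gamma$ and the concavity of $\mathfrak{f}$, implies $\bm{\lambda}(\chi+\nabla^{2}\varphi_\epsilon)+s\bm{1}\in\Gamma$ for all $s\geq 0$ and furnishes the required coercivity of the trace of $F^{ij}$ on the level set $\{\mathfrak{f}=e^{b_\epsilon}e^{f_\epsilon}\}$.

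With the uniform interior Lipschitz estimate in hand, the family $\{\varphi_\epsilon\}$ is equicontinuous on every compact subset of $U$ and uniformly bounded on $M$. Diagonalizing over a countable exhaustion of $U$ by compacts and invoking Arzel\`a-Ascoli, a subsequence converges uniformly on $M$ to a continuous function $\varphi$ that is locally Lipschitz on $U$, while $b_\epsilon\to b$. The stability estimate of Theorem~\ref{theorem-stability-1} identifies the pair $(\varphi,b)$ with the viscosity solution pair produced by Theorem~\ref{main-theorem}, so the limit is simultaneously a viscosity solution and locally Lipschitz on $U$.

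The main obstacle is the interior Lipschitz estimate itself. The $L^\infty$ and stability estimates are integral in character and survive the degeneracy $\inf_M e^f=0$ unscathed, but a pointwise gradient bound must be derived so that the right-hand side depends only on the Lipschitz seminorm of $f_\epsilon$ on $K'$ and not on $\|\nabla\log e^{f_\epsilon}\|_{L^\infty}$, which blows up near the set where $e^f$ vanishes. Carrying out the Bernstein computation in this tightened form, and extracting the coercivity of $\sum_i F^{ii}$ from the cone hypothesis~\eqref{condition-5-32}, are the delicate points of the argument.
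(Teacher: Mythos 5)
Your overall strategy coincides with the paper's: regularize $e^f$, solve the approximating equations smoothly, extract uniform $L^\infty$ and $b$ bounds, derive a uniform interior gradient estimate on $U$, and pass to the limit by Arzel\`a--Ascoli to land on the viscosity solution already produced by Theorem~\ref{main-theorem}. You also correctly identified that condition~\eqref{condition-5-32} is used to obtain the coercivity $\sum_i \mathfrak{F}^{ii}\geq\delta>0$, which is indeed the crux. The paper's mechanism, however, is slightly different from the one you describe: the statement ``$\bm{\lambda}+s\bm{1}\in\Gamma$'' is already a consequence of $\Gamma+\Gamma^n\subset\Gamma$ and has nothing to do with~\eqref{condition-5-32}. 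What~\eqref{condition-5-32} actually buys is, by concavity and positivity of $\mathfrak{f}$, that $(t-1)\sum_i\mathfrak{f}_i\lambda_i\geq\mathfrak{f}(t\bm{\lambda})-\mathfrak{f}(\bm{\lambda})>-\mathfrak{f}(\bm{\lambda})$, so letting $t\to+\infty$ gives $\sum_i\mathfrak{f}_i\lambda_i\geq 0$; this is Condition~(iv) of Section~\ref{Interior gradient estimate}, which combined once more with concavity and Condition~(v) yields $\sum_i\mathfrak{F}^{ii}\geq\epsilon/L$.

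The more substantive gap is in your proposed Bernstein test function $G=\eta^2|\nabla\varphi|^2 e^{-A\varphi}$. First, the sign is backwards: the first-order condition at the max in the coordinates with $\varphi_{y^1}=|\nabla\varphi|$ gives $\varphi_{y^1y^1}\approx\frac{A}{2}|\nabla\varphi|^2>0$, which produces no large negative eigenvalue and hence no leverage. More importantly, the term $A\,\mathfrak{F}^{ij}\varphi_i\varphi_j$ that you claim ``dominates the quartic growth'' cannot be made dominant: it is bounded below only by $(\min_i\mathfrak{F}^{ii})|\nabla\varphi|^2$, and for concave symmetric $\mathfrak{f}$ the eigenvalue $\mathfrak{F}^{11}$ paired with the largest $\lambda_1$ can be arbitrarily small, so there is no uniform coercivity from this term. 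The paper instead uses the Chou--Wang device $G=\frac{1}{2}\ln|\nabla\varphi|^2-\frac{1}{3}\ln(2K-\varphi)+\ln\rho$, whose first-order condition forces the smallest eigenvalue $X_{nn}$ of $\chi+\nabla^2\varphi$ to satisfy $X_{nn}\lesssim-|\nabla\varphi|^2/K$; this, together with the concavity ordering $\mathfrak{F}^{nn}\geq\frac{1}{n}\sum_i\mathfrak{F}^{ii}$, makes the squared-Hessian term $\sum_i\mathfrak{F}^{ii}(\nabla^2\varphi)_{ii}^2\gtrsim\frac{|\nabla\varphi|^4}{K^2}\sum_i\mathfrak{F}^{ii}$ the genuine dominant term. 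Finally, the paper handles the boundary case $\bm{\lambda}(\chi+\nabla^2\varphi)\in\partial\Gamma$ separately via a reparametrization $\tilde{\mathfrak{f}}=h\circ\mathfrak{f}$; your sketch does not address the possibility that the maximum of $G$ sits on a degenerate point. These are the two places your argument as written would not close.
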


This result is based on an interior gradient estimate, which indeed works for more general Hessian type equations on Riemannian manifolds. More details are provided in Section \ref{Interior gradient estimate}.

This paper is organized as follows.
The proof of Theorem \ref{main-theorem} is based on $L^{\infty}$ estimate in Section~\ref{L-estimate} and stability estimate in Section~\ref{S-estimate}. The definitions and existence of weak $C^0$ solution and viscosity solution will be given in Section~\ref{weak-solution}. The interior gradient estimate will be derived in Section \ref{Interior gradient estimate}.

\medskip
\section{Preliminaries}
\label{preliminary}

In this section, we shall state some elementary notations  and formulae.

By Condition (1) and (3),
we know that for any $\bm{\lambda} \in \Gamma$,
\begin{equation}
\mathfrak{f} (\bm{\lambda}) > 0 
\qquad\textup{ and }\qquad
\bm{\lambda} + \Gamma^n \subset \Gamma
,
\end{equation}
where $\Gamma^n := \{\bm{\lambda} \in \mathbb{R}^n | \lambda_1 > 0, \cdots, \lambda_n > 0\}$.

By Condition (2), (4) and (5), we have
\begin{equation}
\frac{\partial\mathfrak{f}}{\partial\lambda_1} (t\bm{1}) = \cdots = \frac{\partial\mathfrak{f}}{\partial\lambda_n} (t\bm{1}) \geq c^{\frac{1}{n}} .
\end{equation}
Then by concavity again
\begin{equation}
\label{inequality-2-3}
	\frac{\partial \mathfrak{f}}{\partial \lambda_1} (\bm{1}) \left(S_1 (\bm{\lambda}) - n \right)
	=
	\sum_i \frac{\partial\mathfrak{f} }{\partial\lambda_i} (\bm{1})  (\lambda_i - 1)
	\geq
	\mathfrak{f} (\bm{\lambda}) - \mathfrak{f} (\bm{1})
	.
\end{equation}
Moreover,
\begin{equation}
	\frac{\partial \mathfrak{f}}{\partial \lambda_i} (s \bm{1}) \geq \frac{\partial\mathfrak{f}}{\partial \lambda_i} (t\bm{1}) \geq c^{\frac{1}{n}},
\end{equation}
for any $t  > s > 0$ if $s \bm{1} \in \Gamma$.
In particular, there exists a positive limit for $\frac{\partial\mathfrak{f}}{\partial\lambda_i} (t \bm{1})$.
Thus we have
\begin{equation}
\lim_{t \to +\infty} \mathfrak{f} (t\bm{1}) = +\infty .
\end{equation}

Condition (4) 
implies that
\begin{equation}
\label{inequality-2-6}
\begin{aligned}
\mathfrak{f} \left(\bm{\lambda}\right)
\geq
\mathfrak{f} \left(\bm{\lambda}\right) - \mathfrak{f}\left(\bm{\lambda} - \bm{\mu}\right)
&\geq
\sum_i \frac{\partial\mathfrak{f}}{\partial\lambda_i} (\bm{\lambda}) \mu_i \\
&\geq n \left(\prod_i \frac{\partial\mathfrak{f}}{\partial\lambda_i} (\bm{\lambda})\right)^{\frac{1}{n}} \left(\prod_i \mu_i\right)^{\frac{1}{n}}
\\
&\geq n \left( c \prod_i \mu_i\right)^{\frac{1}{n}}
\end{aligned}
\end{equation}
for $\bm{\lambda} - \bm{\mu}\in \Gamma$ and $\bm{\mu} \in \Gamma^n$.

At each point $\bm{x} \in M$, define $\bm{\lambda}' = \bm{\lambda} (\chi)$. Then for any continuous function $e^f$, the set
\begin{equation}
\left\{\bm{\lambda} \in \Gamma \,|\, \mathfrak{f}(\bm{\lambda}) = e^f (\bm{x} ) \text{ and } \bm{\lambda} - \bm{\lambda}' \in \Gamma^n\right\}
\end{equation}
is uniformly bounded, as
\begin{equation}
\label{inequality-2-14}
\begin{aligned}
	e^f - \mathfrak{f} (\bm{\lambda}' - \sigma \bm{1})
	\geq
	\sum_i \frac{\partial\mathfrak{f}}{\partial\lambda_i} (\bm{\lambda}) ({\lambda}_i - {\lambda}'_i + \sigma  )
	\geq
	n \left(c \sigma^{n - 1} \left(\max (\lambda_i - \lambda'_i) + \sigma\right)\right)^{\frac{1}{n}}
	.
\end{aligned}
\end{equation}
So we can see that $0$ is a $\mathcal{C}$-subsolution defined by Sz\'ekelyhidi~\cite{Szekelyhidi}.
In particular,
\begin{equation}
\label{inequality-2-9}
\sum^n_{i = 1} \frac{\partial\mathfrak{f}}{\partial \lambda_i} (\bm{\lambda})
\geq
n \left(\prod^n_{i = 1} \frac{\partial\mathfrak{f}}{\partial\lambda_i} (\bm{\lambda})\right)^{\frac{1}{n}}
\geq
n c^{\frac{1}{n}} .
\end{equation}

In this paper, we shall also adopt an Alexandrov-Bakelman-Pucci maximum principle in Sz\'ekelyhidi~\cite{Szekelyhidi}.
\begin{proposition}[Proposition 11 in \cite{Szekelyhidi}]
\label{proposition-abp}
Let $v : B (\bm{0},1) \to \mathbb{R}$ be smooth such that $v (\bm{0} ) + \epsilon \leq \inf_{\partial B (\bm{0},1)} v$, where $\epsilon > 0$ and $B (\bm{0},1) \subset \mathbb{R}^n$ denotes the unit ball.
Define the set
\begin{equation}
	P := \left\{\bm{x} \in B(\bm{0},1) \;\bigg|\; |D v (\bm{x})| < \frac{\epsilon}{2} , \; v (\bm{y}) \geq v (\bm{x}) + D v(\bm{x}) \cdot (\bm{y} - \bm{x}) \;\; \forall \bm{y} \in B (\bm{0} , 1) \right\}.
\end{equation}
Then there exists a dimensional constant $c_0 > 0$ such that
\begin{equation}
	c_0 \epsilon^n \leq \int_P \det D^2 v .
\end{equation}

\end{proposition}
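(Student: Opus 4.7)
The plan is to follow the classical Alexandrov--Bakelman--Pucci strategy: show that the gradient map $Dv$ sends $P$ onto a set of controlled measure in $\mathbb{R}^n$, then invoke the area formula to translate this into a lower bound on $\int_P \det D^2 v$.

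The first main step is to establish the inclusion $B(\bm{0},\epsilon/2) \subset Dv(P)$. For a fixed vector $\bm{p}$ with $|\bm{p}| < \epsilon/2$, I would consider the auxiliary function $w(\bm{y}) := v(\bm{y}) - \bm{p} \cdot \bm{y}$ on $\bar B(\bm{0},1)$ and locate its minimum. The hypothesis $v(\bm{0}) + \epsilon \leq \inf_{\partial B(\bm{0},1)} v$ combined with $|\bm{p}| < \epsilon/2$ forces the boundary values of $w$ to exceed $w(\bm{0})$ by at least $\epsilon/2$, so the minimizer $\bm{x}$ of $w$ lies in the open ball. At $\bm{x}$ one reads off $Dv(\bm{x}) = \bm{p}$, while the minimality of $\bm{x}$ rearranges to the affine-support inequality
\[
v(\bm{y}) \geq v(\bm{x}) + Dv(\bm{x}) \cdot (\bm{y} - \bm{x}), \qquad \bm{y} \in B(\bm{0},1),
\]
placing $\bm{x} \in P$.

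The second step is the area formula. Since $v$ is smooth, so is $Dv \colon B(\bm{0},1) \to \mathbb{R}^n$, and the usual change-of-variables inequality gives $|Dv(P)| \leq \int_P |\det D^2 v|$. On $P$, because $v$ sits above its tangent hyperplane at every point, the Hessian $D^2 v$ is positive semidefinite, so $|\det D^2 v| = \det D^2 v$. Combining with the inclusion from the previous step yields
\[
\omega_n (\epsilon/2)^n \;=\; |B(\bm{0},\epsilon/2)| \;\leq\; |Dv(P)| \;\leq\; \int_P \det D^2 v,
\]
which is the claimed inequality with $c_0 = \omega_n / 2^n$.

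The main (and essentially only) obstacle is confirming that the minimizer of $w$ is interior; this is exactly what the gap $\epsilon$ in the hypothesis is calibrated to guarantee, with the threshold $\epsilon/2$ in the definition of $P$ chosen to leave a matching margin for $|\bm{p}|$. Everything else --- positivity of $D^2 v$ on $P$ and the pointwise Jacobian estimate --- is standard.
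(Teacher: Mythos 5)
The paper does not prove this proposition itself --- it is quoted verbatim as Proposition 11 of Sz\'ekelyhidi's paper and invoked as a black box. Your argument is precisely the standard Alexandrov--Bakelman--Pucci proof that Sz\'ekelyhidi gives: the tilting trick $w(\bm{y}) = v(\bm{y}) - \bm{p}\cdot\bm{y}$ with $|\bm{p}| < \epsilon/2$ forces an interior minimum, which lands in $P$ and shows $B(\bm{0},\epsilon/2)\subset Dv(P)$; positive semidefiniteness of $D^2v$ on $P$ plus the area formula then gives $\omega_n(\epsilon/2)^n \le \int_P \det D^2v$. This is correct and yields $c_0 = \omega_n/2^n$, matching the cited source.
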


There are $C^\infty$ estimates for Equation~\ref{equation-true} with smooth data (see for example \cite{Guan2014, Guan2023, Szekelyhidi}).
We only need to set up an continuity path to solve Equation~\ref{equation-true} with smooth data, which was actually implemented by \cite{Szekelyhidi} (Proposition 26). For completeness, we rephrase the continuity method argument. We shall consider that for $0 \leq t \leq 1$,
\begin{equation}
\label{continuity}
\mathfrak{F} (\chi + \nabla^2 \varphi_t) = e^{b_t} e^{t f + (1 - t) \tilde f}
\end{equation}
with smooth $f$ and $\tilde f = \ln\mathfrak{F} (\chi) $.
At the maximum point of $\varphi_t$,
\begin{equation}
e^{b_t} e^{t f + (1 - t) \tilde f}
\leq
e^{\tilde f}
,
\end{equation}
and hence
\begin{equation}
e^{b_t} \leq e^{t (\tilde f - f)} \leq e^{\max (\tilde f - f)^+} .
\end{equation}
By \eqref{inequality-2-14}, $0$ is a $\mathcal{C}$-subsolution to
\begin{equation}
\mathcal{F} (\chi + \nabla^2 \varphi) = e^{\max (\tilde f - f)^+} e^{\max \tilde f + \max f}.
\end{equation}
This implies that the $C^\infty$ estimates hold true for \eqref{continuity} independent of $t \in [0,1]$.
By a standard argument, there is a solution pair $(\varphi_t,b_t)$ to \eqref{continuity} for all $0 \leq t \leq 1$.

\medskip
\section{$L^\infty$ estimate}
\label{L-estimate}

In this section, we shall derive the $L^\infty$ estimate via Alexandrov-Bakelman-Pucci maximum principle.

\begin{theorem}
\label{theorem-3-1}
Suppose that $f$ is smooth and $\int_M e^{nf} (1 + n |f|)^p d vol < +\infty $ with $p > 0$.
Let $\varphi$ be a smooth solution to Equation~\eqref{equation-hessian}.
Then there is a constant $C$ depending on $\int_M e^{nf} (1 + n |f|)^p d vol $ and geometric data such that $- \varphi < C$.
In particular, there is a constant $C$ depending on $\Vert e^f \Vert_{L^{nq}}$ with $q > 1$ and geometric data such that $- \varphi < C$.
\end{theorem}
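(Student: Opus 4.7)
The strategy is to apply the Alexandrov--Bakelman--Pucci maximum principle (Proposition~\ref{proposition-abp}) to an auxiliary function built from $\varphi$ around its minimum, and then run a De~Giorgi iteration over sublevel sets to convert the resulting local inequality into an $L^\infty$ bound. This adapts to the real setting the scheme developed by Guo--Phong--Tong for complex Hessian equations.

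Let $L := -\inf_M \varphi$; if $L \leq 1$ there is nothing to prove, so assume $L \geq 1$ and fix a minimum point $\bm{x}_0$ of $\varphi$. In a geodesic normal coordinate chart around $\bm{x}_0$, identified after rescaling with $B := B(\bm{0}, 1) \subset \mathbb{R}^n$, we have $g_{ij}(\bm{0}) = \delta_{ij}$, $\Gamma^k_{ij}(\bm{0}) = 0$, and $|\Gamma^k_{ij}(\bm{x})| \leq C_g |\bm{x}|$. For a parameter $\eta \in (0, \sigma/C_1)$ to be chosen (with $C_1$ absorbing the Christoffel bound), form the auxiliary function
\[
  v(\bm{x}) := \varphi(\bm{x}) - \varphi(\bm{x}_0) + \eta\bigl(|\bm{x}|^2 - 1\bigr),
\]
so that $v(\bm{0}) = -\eta$ and $v \geq 0$ on $\partial B$. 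Proposition~\ref{proposition-abp} with $\epsilon = \eta$ gives $c_0 \eta^n \leq \int_P \det D^2 v$, where $P$ is the contact set on which $|Dv| < \eta/2$ and $D^2 v \geq 0$. On $P$ we have $|D\varphi| \leq 3\eta$, so the connection contribution $\Gamma^k D_k \varphi$ is $O(\eta)$ and absorbable in the $\sigma$-gap. Choosing $\bm{\mu}$ to be the eigenvalues of $\nabla^2 \varphi + \sigma g$, both $\bm{\mu} \in \Gamma^n$ (since $\nabla^2\varphi + \sigma g \geq D^2 v \geq 0$ up to this $O(\eta)$ error) and $\bm{\lambda}(\chi + \nabla^2 \varphi) - \bm{\mu} = \bm{\lambda}(\chi - \sigma g) \in \Gamma$ hold, so \eqref{inequality-2-6} yields $e^{nf} \geq n^n c \det D^2 v$ on $P$. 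The tangent-plane condition at $\bm{0}$ moreover forces $v(\bm{x}) \leq -\eta/2$, i.e.\ $\varphi(\bm{x}) \leq -L + \eta/2$, on $P$. Writing $s = \eta/2$, we obtain the seed inequality
\[
  s^n \leq C \int_{\{\varphi \leq -L + s\} \cap B} e^{nf}, \qquad s \in (0, s_0).
\]

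To transform this into a bound on $L$, I iterate dyadically: for $s_k = 2^{-k} s_0$, rerun the construction at shifted levels $-L + s_k$ (applying the same argument to $v_k := \max(\varphi - (-L + s_k), 0) + \eta_k(|\bm{x}|^2 - 1)$ with $\eta_k \sim s_k$), obtaining a telescoping family of inequalities controlling $|\{\varphi \leq -L + s_k\} \cap B|$ by $\int_{\{\varphi \leq -L + s_k\}} e^{nf}$. The weighted integrability $\int_M e^{nf}(1 + n|f|)^p \, dvol < \infty$ is then used to split each such integral at cutoffs $M_k$ that grow with $k$; the weight $(1 + n|f|)^p$ absorbs precisely the logarithmic loss from dyadic summation, and the resulting series converges to give $L \leq C$ depending only on the weighted integral and geometric data. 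The $L^{nq}$ statement follows by H\"older's inequality, since $\int_M e^{nf}(1 + n|f|)^p \leq \Vert e^f \Vert_{L^{nq}}^n \Vert (1 + n|f|)^p \Vert_{L^{q/(q-1)}}$. The main obstacle is the iteration bookkeeping: a single ABP application controls only $\eta$ against a local integral, not $L$ itself, and the logarithmic weight $(1 + n|f|)^p$ is precisely the slack that makes the dyadic summation close. The subsidiary difficulty is absorbing the connection terms in $\nabla^2 \varphi$ into the $\sigma$-gap, handled as above using the contact-set bound $|D\varphi| = O(\eta)$.
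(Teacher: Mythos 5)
Your setup through the ABP contact-set inequality matches the paper: you build the auxiliary function $u(\bm{x}) = \varphi(\bm{x}) - \varphi(\bm{x}_0) + \epsilon|\bm{x}|^2$ near the minimum, absorb the Christoffel contribution into the $\sigma$-gap using $|Du| < \epsilon/2$ on $P$, and arrive at $c_0\epsilon^n \leq C'\int_P e^{nf}\,d\,vol$ together with the observation that on $P$ one has $\varphi \leq \inf_M \varphi + \epsilon/2$. Up to this point you are faithfully following the paper's argument. After that your proof diverges from the paper's and, as sketched, it has a real gap.

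The paper does \emph{not} run a De Giorgi iteration here. Its key step is a single-shot comparison: since on $P$ one has $-\varphi + 1 \geq -\inf_M\varphi + 1 - \epsilon/2$, one may insert the factor $\ln^p(-\varphi+1)/\ln^p(-\inf_M\varphi + 1 - \epsilon/2) \geq 1$ inside the contact-set integral, and then apply the generalized Young inequality
\begin{equation*}
\bigl(\tfrac{1}{2}\ln(-\varphi+1)\bigr)^p e^{nf} \leq e^{nf}(1+n|f|)^p + C(p)(-\varphi+1)
\end{equation*}
pointwise, which separates the right-hand side into the prescribed weighted integral of $e^{nf}$ plus $\|\varphi\|_{L^1}$ (the latter bounded because $\bm{\lambda}(\chi+\nabla^2\varphi)\in\Gamma\subset\Gamma^1$ gives a lower bound on $\Delta\varphi$ and hence, with $\sup\varphi=0$, an $L^1$ bound). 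This directly yields $\ln^p(L) \leq C$. Your iteration, by contrast, is not coherently set up: the ABP principle gives one contact-set inequality localized around the global minimum, and it does not produce a family of sublevel-set measure decay inequalities of the form needed for dyadic iteration. Your proposed $v_k := \max(\varphi - (-L+s_k), 0) + \eta_k(|\bm{x}|^2-1)$ is not $C^2$, so Proposition~\ref{proposition-abp} does not apply to it, and you do not show how an inequality at level $s_k$ controls the measure or integral at level $s_{k+1}$ with a gain; the claim that ``the weight $(1+n|f|)^p$ absorbs precisely the logarithmic loss from dyadic summation'' is asserted, not demonstrated. There is no De Giorgi-type decay estimate in your sketch to iterate.

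A secondary issue: your last sentence reduces the $L^{nq}$ case to the weighted-$L^1$ case via $\int e^{nf}(1+n|f|)^p \leq \|e^f\|_{L^{nq}}^n \,\|(1+n|f|)^p\|_{L^{q/(q-1)}}$, but $\|(1+n|f|)^p\|_{L^{q/(q-1)}}$ need not be finite when $e^f$ vanishes on a set of positive measure ($|f|$ is then not integrable to any power). The paper handles the $L^{nq}$ statement separately and directly by H\"older on the contact set, writing $\int_P e^{nf} \leq (-\inf_M\varphi + 1 - \epsilon/2)^{-(q-1)/q}\int_M(-\varphi+1)^{(q-1)/q}e^{nf}$, avoiding this issue. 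You should treat the two statements of the theorem as two parallel applications of the same contact-set inequality rather than one implying the other.
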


\begin{proof}

At any point $\bm{x} \in M$ with $\sigma g + \nabla^2 \varphi \geq 0$,
\begin{equation}
\label{inequality-3-1}
	e^f
	\geq
	\sum_{i,j} \mathfrak{F}^{ij} (\chi + \nabla^2 \varphi) \left(\sigma g_{ij} + \nabla^2_{ij} \varphi\right)
	\geq  n \left(c S_n (\bm{\lambda} (\sigma g + \nabla^2 \varphi))\right)^{\frac{1}{n}}
	.
\end{equation}
With a local chart around $\bm{x}$, we can rewrite \eqref{inequality-3-1} as
\begin{equation}
\label{inequality-3-2}
	\det \left(\sigma g + \nabla^2 \varphi \right) \leq  \frac{e^{nf}}{n^n c} \det g .
\end{equation}

Since $(M,g)$ is compact without boundary, there exists a constant $r_0 > 0$ such that for any point $\bm{x}_0 \in M$, there is a normal coordinate system $(x^1,\cdots,x^n)$ with $\bm{x}_0$ as the origin such that $g_{ij} (\bm{0}) = \delta_{ij}$ and
\begin{equation}
  0
  <
  \frac{1}{4} \nabla^2 |\bm{x}|^2
  \leq   g
  \leq   \nabla^2 |\bm{x}|^2
  ,
  \qquad
  \text{ in } B (\bm{0} , r_0)
  .
\end{equation}
Without loss of generality, we may assume that $r_0 = 1$.

Now we let $\bm{x}_0$ be the minimum point of $\varphi$ on $M$.
We define on $B (\bm{0},1)$
\begin{equation}
  u (\bm{x}) := \varphi (\bm{x}) - \varphi (\bm{0}) + \epsilon |\bm{x}|^2 ,
\end{equation}
and
\begin{equation}
  P
  :=
  \left\{ \bm{x} \in B (\bm{0} ,1) \; \bigg| \;   |D u (\bm{x})| < \frac{\epsilon}{2} ,\;  u (\bm{y}) \geq u (\bm{x}) + D u (\bm{x}) \cdot (\bm{y} - \bm{x})  \;\;\forall \bm{y} \in B (\bm{0} , 1)\right\}
  ,
\end{equation}
where $\epsilon > 0$ is to be specified later.
In $P$, we can see that
\begin{equation}
\label{inequality-3-6}
\begin{aligned}
	0
	\leq
	\left[ D^2_{ij} u\right]
	&=
	\left[ \nabla^2_{ij} u + \sum_k \Gamma^k_{ij} u_k \right]
	\\
	&\leq
	\left[ \nabla^2_{ij} \varphi + \epsilon \nabla^2_{ij} |\bm{x}|^2 + C (g) |\nabla \varphi| g_{ij}\right]
	\\
	&\leq
	\left[
	\sigma g_{ij} + \nabla^2_{ij} \varphi - \sigma g_{ij} + 4 \epsilon g_{ij } + \frac{C (g)}{2} \epsilon g_{ij}
	\right]
	\\
	&\leq
	\left[\sigma g_{ij} + \nabla^2_{ij} \varphi\right]
	,
\end{aligned}
\end{equation}
if we choose $\epsilon = \frac{\sigma}{8 + C (g)}$.
Without loss of generality, we may assume that $\epsilon < 1$.
Moreover,
\begin{equation}
	0
	\geq
	u (\bm{x})  - D u (\bm{x}) \cdot \bm{x}
	\geq
	u (\bm{x}) - \frac{\epsilon}{2} |\bm{x}|
	\geq
	u (\bm{x}) - \frac{\epsilon}{2}
	\geq
	\varphi (\bm{x}) - \varphi (\bm{0}) - \frac{\epsilon}{2} ,
\end{equation}
that is,
\begin{equation}
\label{inequality-3-8}
	\varphi (\bm{x})
	\leq \inf_M \varphi + \frac{\epsilon}{2}
	.
\end{equation}

It is easy to see that
\begin{equation}
	u (\bm{0}) + \epsilon
 	= \epsilon
 	\leq \inf_{\partial B (\bm{0},1)} u .
\end{equation}
By Proposition~\ref{proposition-abp} and \eqref{inequality-3-2}\eqref{inequality-3-6},
\begin{equation}
\label{inequality-3-10}
\begin{aligned}
c_0 \epsilon^n
&\leq
\int_P \det D^2 u
\\
&\leq
\int_P \det \left(\sigma g + \nabla^2 \varphi\right)
\\
&\leq
 \frac{1}{n^n c} \int_P   e^{nf} \det g
\\
&\leq
C' (c,g) \int_P e^{nf} d\, vol
.
\end{aligned}
\end{equation}
Since $\varphi \leq 0$,
\begin{equation}
\label{inequality-3-11}
\begin{aligned}
	c_0 \epsilon^n
	&\leq
	C' (c,g)  \int_P \frac{\ln^p \left(-\varphi + 1 \right)}{\ln^p (- \inf_M \varphi + 1 - \frac{\epsilon}{2})} e^{nf} d\, vol
	\\
	&=
	\frac{C' (c,g) }{\ln^p (- \inf_M \varphi + 1 - \frac{\epsilon}{2})} \int_P \ln^p (-\varphi + 1) e^{nf} d \, vol
	\\
	&=
 	\frac{C' (c,g)  2^p}{\ln^p (- \inf_M \varphi + 1 - \frac{\epsilon}{2})} \int_P \left( \frac{\ln (-\varphi + 1)}{2}\right)^p e^{nf} d \, vol
	.
\end{aligned}
\end{equation}
The generalized Young's inequality tells us that for any $p > 0$
\begin{equation}
	\left(\frac{\ln (- \varphi + 1)}{2}\right)^p e^{nf}
	\leq
	e^{nf} (1 + n |f|)^p + C (p) (-\varphi + 1) ,
\end{equation}
and hence from \eqref{inequality-3-11},
\begin{equation}
\begin{aligned}
	c_0 \epsilon^n
	&\leq
 	\frac{C' (c,g)  2^p}{\ln^p (- \inf_M \varphi + 1 - \frac{\epsilon}{2})}
 	\left(\int_P e^{nf} (1 + n |f|)^p d \, vol  + C (p) \int_P (-\varphi + 1) d\, vol\right)
 	\\
 	&\leq
	\frac{C' (c,g)  2^p}{\ln^p (- \inf_M \varphi + 1 - \frac{\epsilon}{2})}
 	\left(\int_M e^{nf} (1 + n |f|)^p d \, vol  + C (p) (\Vert \varphi \Vert_{L^1} + vol (M) ) \right)
 	.
\end{aligned}
\end{equation}

Also, we can utilize H\"older inequality and obtain
\begin{equation}
\begin{aligned}
c_0 \epsilon^n
&\leq
C' (c,g)  \int_P \left(\frac{- \varphi + 1}{- \inf_M \varphi + 1 - \frac{\epsilon}{2}}\right)^{\frac{q - 1}{q}} e^{nf} d\, vol
\\
&\leq
\frac{C' (c,g) }{\left( - \inf_M \varphi + 1 - \frac{\epsilon}{2} \right)^{\frac{q - 1}{q}} } \int_M \left( - \varphi + 1 \right)^{\frac{q - 1}{q}} e^{nf} d\, vol
\\
&\leq
\frac{C' (c,g) }{\left( - \inf_M \varphi + 1 - \frac{\epsilon}{2} \right)^{\frac{q - 1}{q}} } \left(\Vert\varphi\Vert_{L^1} + vol (M)\right)^{\frac{q - 1}{q }} \left\Vert e^{nf}\right\Vert_{L^q}
.
\end{aligned}
\end{equation}

\end{proof}

To apply the above $L^\infty$ estimate to  Equation~\ref{equation-true}, it is crucial to determine the upper bound of  $e^b$.

\begin{corollary}
\label{corollary-upper-bound}
Suppose  that $\int_M e^f d vol > 0$. Then there is an upper bound for the constant $b$.
\end{corollary}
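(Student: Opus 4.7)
The plan is to convert the equation into an integral inequality via the concavity of $\mathfrak{f}$ at the point $\bm{1}$, and then exploit that $M$ is closed so that the Laplacian term integrates to zero.

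The starting point is the concavity condition (Condition (5)), which, combined with the permutation invariance (Condition (2)), gives the tangent line inequality
\begin{equation*}
\mathfrak{f}(\bm{\lambda}) \,\leq\, \mathfrak{f}(\bm{1}) + \frac{\partial \mathfrak{f}}{\partial \lambda_1}(\bm{1})\,\bigl(S_1(\bm{\lambda}) - n\bigr)
\end{equation*}
for every $\bm{\lambda} \in \Gamma$, where $S_1(\bm{\lambda}) = \lambda_1 + \cdots + \lambda_n$. Here I am using that $\bm{1} \in \Gamma$, which is part of the standing assumptions, and that $\frac{\partial \mathfrak{f}}{\partial \lambda_i}(\bm{1})$ is the same constant $A := \frac{\partial \mathfrak{f}}{\partial \lambda_1}(\bm{1}) > 0$ for all $i$ by symmetry and ellipticity.

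Next I would apply this to $\bm{\lambda} = \bm{\lambda}(\chi + \nabla^2 \varphi)$, which lies in $\Gamma$ since $(\varphi,b)$ is an admissible solution pair. Observing that $S_1(\bm{\lambda}(\chi + \nabla^2 \varphi)) = \mathrm{tr}_g \chi + \Delta_g \varphi$, the equation \eqref{equation-true} yields the pointwise estimate
\begin{equation*}
e^b\, e^f \,=\, \mathfrak{F}(\chi + \nabla^2 \varphi) \,\leq\, \mathfrak{f}(\bm{1}) + A\,\bigl(\mathrm{tr}_g \chi + \Delta_g \varphi - n\bigr) .
\end{equation*}

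Integrating both sides over $(M,g)$ and using the divergence theorem $\int_M \Delta_g \varphi\, d\,vol = 0$ on the closed manifold $M$, I obtain
\begin{equation*}
e^b \int_M e^f\, d\,vol \,\leq\, \mathfrak{f}(\bm{1})\, vol(M) + A \int_M \mathrm{tr}_g \chi\, d\,vol - A\,n\, vol(M) ,
\end{equation*}
and the right-hand side is a finite constant depending only on $\chi$, $g$ and the structural data of $\mathfrak{f}$. Since by hypothesis $\int_M e^f\, d\,vol > 0$, dividing yields the desired upper bound
\begin{equation*}
e^b \,\leq\, \frac{C(\chi, g, \mathfrak{f})}{\int_M e^f\, d\,vol} .
\end{equation*}
There is no substantial obstacle; the argument crucially uses the closedness of $M$ together with the positivity of $\int_M e^f$, which is precisely why this assumption is made.
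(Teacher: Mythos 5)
Your proof is correct and is essentially the same as the paper's: both invoke the concavity tangent-line inequality at $\bm{1}$ (the paper's \eqref{inequality-2-3}), rearrange to isolate $\Delta_g\varphi$, integrate over the closed manifold $M$ to kill the Laplacian term by the divergence theorem, and divide by $\int_M e^f\,d\,vol > 0$. The only cosmetic difference is that the paper drops the favorable $-An\,vol(M)$ term and states the bound as a strict inequality.
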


\begin{proof}

By \eqref{inequality-2-3},
\begin{equation}
\label{inequality-3-17}
	   \Delta \varphi
	  \geq
	\frac{1}{\frac{\partial\mathfrak{f}}{\partial\lambda_1} (\bm{1})}\left( e^b e^f - \mathfrak{f} (\bm{1}) \right) - S_1 (\bm{\lambda} (\chi))  + n
	.
\end{equation}
Integrating \eqref{inequality-3-17},
\begin{equation}
	   \frac{\partial\mathfrak{f}}{\partial\lambda_1} (\bm{1})  \int_M S_1 (\bm{\lambda} (\chi)) d\,vol + \mathfrak{f} (\bm{1}) vol (M)  > e^b \int_M e^f d\, vol .
\end{equation}

\end{proof}

The lower bound of $e^b$ is useful in constructing a solution pair.
\begin{corollary}
\label{corollary-lower-bound}
Suppose that $\int_M e^{nf} dvol < + \infty$. Then there is a lower bound for the constant $b$.
\end{corollary}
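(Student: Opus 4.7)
The plan is to re-run the Alexandrov-Bakelman-Pucci argument from the proof of Theorem \ref{theorem-3-1}, but applied to Equation~\eqref{equation-true} instead of \eqref{equation-hessian}, and then read off a lower bound for $e^b$ rather than a bound for $-\varphi$. The point is that every step in the ABP pipeline is insensitive to how large or small the right-hand side of the PDE is; all that shifts is where the constant $e^{nb}$ sits, and one can therefore isolate it at the end.

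\textbf{Step 1.} Pick $\bm x_0$ a minimum point of $\varphi$ on $M$, fix the same normal coordinate chart on $B(\bm 0,1)$, form the test function $u(\bm x) = \varphi(\bm x) - \varphi(\bm 0) + \epsilon |\bm x|^2$ with $\epsilon = \sigma/(8 + C(g))$, and define the contact set $P$ exactly as in \eqref{inequality-3-6}. The choice of $\epsilon$ still guarantees $0 \leq [D^2_{ij} u] \leq [\sigma g_{ij} + \nabla^2_{ij} \varphi]$ inside $P$, and one still has $u(\bm 0) + \epsilon \leq \inf_{\partial B(\bm 0,1)} u$. So Proposition~\ref{proposition-abp} applies without modification.

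\textbf{Step 2.} Insert the equation $\mathfrak{F}(\chi+\nabla^2 \varphi) = e^{b} e^f$ into the inequality that produced \eqref{inequality-3-2}, namely
\begin{equation*}
\det(\sigma g + \nabla^2 \varphi) \leq \frac{e^{n(b+f)}}{n^n c}\det g = \frac{e^{nb} e^{nf}}{n^n c}\det g .
\end{equation*}
Chaining this with Proposition~\ref{proposition-abp} gives, exactly as in \eqref{inequality-3-10},
\begin{equation*}
c_0 \epsilon^n \leq \int_P \det D^2 u \leq \int_P \det(\sigma g + \nabla^2 \varphi) \leq C'(c,g)\, e^{nb} \int_M e^{nf}\, d\,vol .
\end{equation*}
Solving for $e^{nb}$ yields
\begin{equation*}
e^{nb} \geq \frac{c_0 \epsilon^n}{C'(c,g)\, \int_M e^{nf}\, d\,vol},
\end{equation*}
and taking $n$-th roots delivers the required lower bound on $e^b$ in terms of $\int_M e^{nf}\, d\,vol$ and geometric data (through $\sigma$, $g$ and $c$).

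\textbf{Main obstacle.} The argument is essentially bookkeeping once Theorem~\ref{theorem-3-1} is in hand; the only delicate point is that the denominator $\int_M e^{nf}\, d\,vol$ must be strictly positive, since otherwise no lower bound on $e^b$ can be extracted. In the setting where this corollary will ultimately be used (namely, the hypothesis $\int_M e^f\, d\,vol > 0$ in Theorem~\ref{main-theorem}), this positivity is automatic, so the estimate is meaningful. Verifying that the constant $\epsilon$ chosen here coincides with the one in Theorem~\ref{theorem-3-1} and that the set $P$ is the same is straightforward: both only depend on $\sigma$ and the background geometry, not on $f$ or $b$.
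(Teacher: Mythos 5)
Your proposal is correct and follows essentially the same route as the paper: the paper likewise plugs $e^b e^f$ into the pointwise determinant bound of Theorem~\ref{theorem-3-1} and then cites inequality \eqref{inequality-3-10} to get $c_0\epsilon^n \leq C'(c,g)\, e^{nb} \int_M e^{nf}\, d\,vol$, which is solved for $e^{nb}$. Your closing remark about needing $\int_M e^{nf}\, d\,vol > 0$ is a sensible observation (the inequality would otherwise force a contradiction, showing no $C^2$ solution exists), but it does not identify a gap in the argument.
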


\begin{proof}

At any point $\bm{x} \in M$ with $\sigma g + \nabla^2 \varphi \geq 0$,
\begin{equation}
 	e^b e^f
 	\geq  n c^{\frac{1}{n}}  \left(S_n (\bm{\lambda} (\sigma g + \nabla^2 \varphi))\right)^{\frac{1}{n}}
 	.
\end{equation}
According to Inequality~\eqref{inequality-3-10} in the proof of Theorem~\ref{theorem-3-1},
\begin{equation}
\begin{aligned}
c_0 \epsilon^n
&\leq
C' (c,g)  e^{nb} \int_P e^{nf} d\, vol
&\leq
C' (c,g)  e^{nb} \int_M e^{nf} d\, vol
.
\end{aligned}
\end{equation}

\end{proof}

\medskip
\section{Stability estimate}
\label{S-estimate}

In this section, we plan to study the stability estimates, that is, the estimates of the difference of two solutions $\varphi_1$ and $\varphi_2$.
Assume that there are $C^2$ functions $\varphi_1$ and $\varphi_2$ such that
\begin{equation}
\label{equation-4-1}
\mathfrak{F} (\chi + \nabla^2 \varphi_1) = e^f , \qquad \sup_M \varphi_1 = 0 ,
\end{equation}
and
\begin{equation}
\label{equation-4-2}
\mathfrak{F} (\chi + \nabla^2 \varphi_2) = e^{\tilde f} , \qquad \sup_M \varphi_2 = 0 .
\end{equation}
When $e^{nf} \in L^q$ for some $q > 1$, we have a bound for $\Vert \varphi_1 \Vert_{L^\infty}$ by the result in Section~\ref{L-estimate}.
By concavity of $\mathfrak{F}$ and \eqref{inequality-2-6},
\begin{equation}
\begin{aligned}
e^f
	&\geq
	(1 - r) \mathfrak{F} (\chi + \nabla^2 \varphi_2) + r \mathfrak{F} \left(\chi + \frac{1}{r} \nabla^2 \left(\varphi_1 - (1 - r) \varphi_2\right)\right)
	\\
	&\geq
	r \mathfrak{F} \left(\chi + \frac{1}{r} \nabla^2 \left(\varphi_1 - (1 - r) \varphi_2\right)\right)
	\\
	&\geq
	r \mathfrak{F} \left(\sigma g + \frac{1}{r} \nabla^2 \left(\varphi_1 - (1 - r) \varphi_2\right)\right)
	\\
	&\geq
	n r  c^{\frac{1}{n}}  S^{\frac{1}{n}}_n \left(\bm{\lambda} \left(\sigma g + \frac{1}{r} \nabla^2 \left(\varphi_1 - (1 - r) \varphi_2\right)\right) \right)
	,
\end{aligned}
\end{equation}
wherever $\sigma g + \frac{1}{r}\nabla^2 (\varphi_1 - (1 - r) \varphi_2) \geq 0$
for any $r \in \left(0,\frac{1}{2}\right)$.

Let $\bm{x}_0$ be the minimum point of $\frac{1}{r}\varphi_1 - \frac{1 - r}{r}\varphi_2$ on $M$.
We define
\begin{equation}
  u (\bm{x}) := \frac{1}{r}\varphi_1 (\bm{x}) - \frac{1 - r}{r}\varphi_2 (\bm{x}) - \frac{1}{r}\varphi_1 (\bm{0}) + \frac{1 - r}{r}\varphi_2 (\bm{0}) + \epsilon |\bm{x}|^2 ,
\end{equation}
and
\begin{equation}
  P
  :=
  \left\{ \bm{x} \in B (\bm{0} ,1) \; \bigg| \;   |D u (\bm{x})| < \frac{\epsilon}{2} ,\;  u (\bm{y}) \geq u (\bm{x}) + D u (\bm{x}) \cdot (\bm{y} - \bm{x})  \;\;\forall \bm{y} \in B (\bm{0} , 1)\right\}
  .
\end{equation}
Moreover, in $P$
\begin{equation}
	0
	\geq
	u (\bm{x}) - \frac{\epsilon}{2} |\bm{x}|
	\geq
	\frac{1}{r}\varphi_1 (\bm{x}) - \frac{1 - r}{r}\varphi_2 (\bm{x}) - \frac{1}{r}\varphi_1 (\bm{0}) + \frac{1 - r}{r}\varphi_2 (\bm{0}) - \frac{\epsilon}{2} ,
\end{equation}
that is,
\begin{equation}
\label{inequality-4-7}
	\frac{1}{r}\varphi_1 (\bm{x}) - \frac{1 - r}{r}\varphi_2 (\bm{x})
	\leq \inf_M \left(\frac{1}{r}\varphi_1  - \frac{1 - r}{r}\varphi_2  \right) + \frac{\epsilon}{2}
	.
\end{equation}
As in Section~\ref{L-estimate}, we obtain that in $P$
\begin{equation}
\label{inequality-4-8}
	0
	\leq
	D^2 u
	\leq
	\sigma g  +  \frac{1}{r} \nabla^2 \varphi_1 - \frac{1 - r}{r} \nabla^2 \varphi_2
	,
\end{equation}
and hence
\begin{equation}
\label{inequality-4-9}
\begin{aligned}
c_0 \epsilon^n
&\leq
\int_P \det D^2 u
\\
&\leq
\int_P \det \left(\sigma g + \frac{1}{r}\nabla^2 \varphi_1 - \frac{1 - r}{r} \nabla^2 \varphi_2\right)
\\
&\leq
 \frac{1}{c n^n r^n} \int_P   e^{nf} \det g
\\
&\leq
\frac{C' (c,g)}{r^n} \int_P e^{nf} d\, vol
.
\end{aligned}
\end{equation}
For fixed $p > 0$, we choose
\begin{equation}
\label{inequality-4-10}
r := \Vert (\varphi_2 - \varphi_1)^+\Vert^{\frac{p (q - 1)}{nq + p (q - 1)}}_{L^p} .
\end{equation}

If $0 < r < \frac{1}{2}$, we can apply Alexandrov-Bakelman-Pucci maximum principle to \eqref{inequality-4-9}.
There are two cases to discuss respectively.
The first case is that
\begin{equation}
- \inf_M \left(\varphi_1 - (1 - r) \varphi_2\right) \leq (1 + \Vert \varphi_1 \Vert_{L^\infty}) r ,
\end{equation}
and we have
\begin{equation}
\varphi_2 - \varphi_1
\leq
- \varphi_1 + \varphi_2 - r \varphi_2
\leq
(1 + \Vert \varphi_1 \Vert_{L^\infty}) r
.
\end{equation}
With \eqref{inequality-4-7}, the other case is then
\begin{equation}
\label{inequality-4-12}
\begin{aligned}
	\frac{1 - r}{r} (\varphi_1 - \varphi_2)
	&\leq
	\frac{1}{r}\varphi_1  - \frac{1 - r}{r}\varphi_2   +  \Vert \varphi_1 \Vert_{L^\infty}
	\\
	&\leq \inf_M \left(\frac{1}{r}\varphi_1  - \frac{1 - r}{r}\varphi_2  \right) +  \Vert \varphi_1 \Vert_{L^\infty} + \frac{\epsilon}{2} 	
	< - \frac{1}{2} ,
\end{aligned}
\end{equation}
for any $\bm{x} \in P$.
Combining \eqref{inequality-4-9} and \eqref{inequality-4-12},
\begin{equation}
\label{inequality-4-13}
\begin{aligned}
	c_0 \epsilon^n
	&\leq
	\frac{C' (c,g)}{r^n} \int_P e^{nf} d\, vol
	\\
	&\leq
	\frac{C' (c,g)}{r^n} \int_P \left(\frac{- \frac{1}{r} \varphi_1 + \frac{1 - r}{r} \varphi_2 -  \Vert \varphi_1 \Vert_{L^\infty}}{- \inf_M \left(\frac{1}{r}\varphi_1  - \frac{1 - r}{r}\varphi_2  \right)  -  \Vert \varphi_1 \Vert_{L^\infty} - \frac{\epsilon}{2} 	} \right)^{\frac{p (q - 1)}{q}} e^{nf} d\, vol
	\\
	&=
	\frac{C' (c,g)}{r^n} \int_P \left(\frac{-  (1 - r) \varphi_1 - r \varphi_1 + (1 - r) \varphi_2 -  \Vert \varphi_1 \Vert_{L^\infty} r}{- \inf_M \left( \varphi_1  - (1 - r) \varphi_2  \right)  -  \Vert \varphi_1 \Vert_{L^\infty} r - \frac{\epsilon}{2} r	} \right)^{\frac{p (q - 1)}{q}} e^{nf} d\, vol
	\\
	&\leq
	\frac{C' (c,g)}{r^n} \int_P \left( \frac{-  (1 - r) \varphi_1 + (1 - r) \varphi_2 }{- \inf_M \left( \varphi_1  - (1 - r) \varphi_2  \right)  -  \Vert \varphi_1 \Vert_{L^\infty} r - \frac{\epsilon}{2} r } \right)^{\frac{p (q - 1)}{q}}	e^{nf} d\, vol
	\\
	&\leq
	\frac{C' (c,g)}{r^n} \int_P \left( \frac{\varphi_2 -    \varphi_1  }{- \inf_M \left( \varphi_1  - (1 - r) \varphi_2  \right)  -  \Vert \varphi_1 \Vert_{L^\infty} r - \frac{\epsilon}{2} r } \right)^{\frac{p (q - 1)}{q}}	e^{nf} d\, vol
	,
\end{aligned}
\end{equation}
for $p > 0$.
Applying H\"older inequality to \eqref{inequality-4-13}
\begin{equation}
\label{inequality-4-15}
\begin{aligned}
&\quad
	 c_0 \epsilon^n \left(- \inf_M \left( \varphi_1  - (1 - r) \varphi_2  \right)  - \Vert \varphi_1 \Vert_{L^\infty} r - \frac{\epsilon}{2} r\right)^{\frac{p (q - 1)}{q}}
	\\
	&\leq
	\frac{C' (c,g)}{ r^n}  \int_P (\varphi_2 - \varphi_1)^{\frac{p (q - 1)}{q}}  e^{nf} d\, vol
	\\	
	&\leq
	\frac{C' (c,g)}{ r^n}
	\left( \int_P (\varphi_2 - \varphi_1)^p d\, vol \right)^{\frac{q - 1}{q}} \Vert e^{nf} \Vert_{L^q}
	\\
	&\leq
	\frac{C' (c,g)}{ r^n}
	\Vert (\varphi_2 - \varphi_1)^+\Vert^{\frac{p (q - 1)}{q}}_{L^p}  \Vert e^{nf} \Vert_{L^q}
	.
\end{aligned}
\end{equation}
Substituting \eqref{inequality-4-10} into \eqref{inequality-4-15},
\begin{equation}
\label{inequality-4-16}
\begin{aligned}
&\quad
	c_0 \epsilon^n \left(- \inf_M \left( \varphi_1  - (1 - r) \varphi_2  \right) -   \Vert \varphi_1 \Vert_{L^\infty} r  - \frac{\epsilon}{2} r\right)^{\frac{p (q - 1)}{q}}
	\leq
	C' (c,g) r^{  \frac{p (q - 1)}{q}}   \Vert e^{nf} \Vert_{L^q}
	.
\end{aligned}
\end{equation}
Rewriting \eqref{inequality-4-16},
\begin{equation}
\label{inequality-4-17}
\begin{aligned}
	\varphi_2 - \varphi_1
	\leq
	(1 - r) \varphi_2 - \varphi_1
	\leq
	\left(\frac{C' (c,g) \Vert e^{nf} \Vert_{L^q}}{c_0 \epsilon^n}\right)^{\frac{q}{p (q - 1)}} r
	+
	\Vert \varphi_1 \Vert_{L^\infty} r
	+
	\frac{\epsilon}{2} r
	.
\end{aligned}
\end{equation}

If $r = 0$, then $\varphi_2 - \varphi_1 \leq 0$  everywhere on $M$.

If $r \geq \frac{1}{2}$, then we have
\begin{equation}
\varphi_2 - \varphi_1
\leq
- \varphi_1
\leq
\Vert \varphi_1 \Vert_{L^\infty}
\leq
2 \Vert \varphi_1 \Vert_{L^\infty} r .
\end{equation}

Therefore, we complete the proof of Theorem~\ref{theorem-stability-1}

\medskip
\section{Weak solutions}
\label{weak-solution}

In this section, we shall study the weak solution pair to Equation~\eqref{equation-true}.

\medskip

\subsection{Construction of a limit function pair}
\label{limit-function}

We shall first construct a limit function pair $(\varphi, b)$ with respect to Equation~\eqref{equation-true}.
We need a compact embedding property which was also proved for complex case~\cite{Sun202305}. For completeness, we state the proof here.
\begin{theorem}
\label{theorem-5-1}
The set of uniformly $L^\infty$-bounded $C^2$ solutions to Equation~\eqref{equation-true} is precompact in  $L^{q'}$ norm for any $1 \leq q' < + \infty$.
\end{theorem}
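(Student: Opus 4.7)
The plan is to turn the admissibility constraint into a uniform Sobolev bound on the family, and then to deduce $L^{q'}$ precompactness by combining Rellich--Kondrachov compactness with interpolation against the $L^\infty$ hypothesis.

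The first and main step is a uniform $W^{1,2}$ bound. Because $\Gamma \subset \Gamma^1$, every admissible $\varphi$ satisfies the pointwise inequality $\Delta \varphi > -\operatorname{tr}_g \chi$, which gives an $L^\infty$ bound on $(\Delta \varphi)^-$ depending only on $(M,g,\chi)$. Since $M$ is a closed manifold, the divergence theorem forces $\int_M \Delta \varphi \, dvol = 0$, hence $\|(\Delta \varphi)^+\|_{L^1} = \|(\Delta \varphi)^-\|_{L^1}$, and therefore $\|\Delta \varphi\|_{L^1}$ is uniformly controlled. Integration by parts then yields $\int_M |\nabla \varphi|^2 \, dvol = -\int_M \varphi \, \Delta \varphi \, dvol \leq \|\varphi\|_{L^\infty} \|\Delta \varphi\|_{L^1}$, which is uniformly bounded by the $L^\infty$ hypothesis. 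Thus the family is uniformly bounded in $W^{1,2}(M)$.

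Given this, Rellich--Kondrachov compactness on the closed manifold $M$ provides precompactness of the family in $L^2(M)$. For any $q' \in [2,\infty)$, the elementary interpolation inequality $\|u\|_{L^{q'}} \leq \|u\|_{L^2}^{2/q'} \|u\|_{L^\infty}^{1 - 2/q'}$, combined with the uniform $L^\infty$ hypothesis, upgrades convergence in $L^2$ to convergence in $L^{q'}$; the range $q' \in [1,2)$ follows from H\"older's inequality. The accompanying constants $b$ attached to solutions of \eqref{equation-true} lie in a bounded interval of $\mathbb{R}$ by Corollaries~\ref{corollary-upper-bound} and~\ref{corollary-lower-bound} (under the standing assumptions $\int_M e^f \, dvol > 0$ and $\int_M e^{nf} \, dvol < \infty$), so they are precompact as well.

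The only conceptually substantive point is the passage from pointwise admissibility to an integrable Laplacian bound, which crucially exploits the absence of boundary on $M$ to turn a one-sided $L^\infty$ control of $\Delta \varphi$ into a two-sided $L^1$ control. After that, the compactness chain is standard Sobolev machinery, and I do not anticipate a serious obstacle.
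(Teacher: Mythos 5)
Your proposal is correct and follows essentially the same route as the paper: both derive a uniform $W^{1,2}$ bound from the pointwise admissibility condition $S_1\bigl(\bm{\lambda}(\chi + \nabla^2\varphi)\bigr) > 0$ (equivalently $\Delta\varphi > -\operatorname{tr}_g\chi$) together with integration by parts on the closed manifold, and then combine compact Sobolev embedding with interpolation against the $L^\infty$ hypothesis to obtain $L^{q'}$ precompactness. The only immaterial differences are that you pass through $L^2$ while the paper passes through $L^1$, and that you isolate the $\|\Delta\varphi\|_{L^1}$ bound as a separate step rather than absorbing it into a single integral inequality.
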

\begin{proof}

Since $\bm{\lambda} (\chi + \nabla^2 \varphi), \bm{\lambda} (\chi) \in \Gamma \subset \Gamma^1$ ,
\begin{equation}
\label{inequality-5-1}
\begin{aligned}
	\Vert \varphi \Vert_{L^\infty} \int_M  S_1 (\bm{\lambda} (\chi ))  d \, vol
	\geq
	\int_M - \varphi S_1 (\bm{\lambda} (\chi + \nabla^2 \varphi)) d\, vol
	\geq
	\Vert \nabla \varphi\Vert^2_{L^2}
	.
\end{aligned}
\end{equation}
If $\{\varphi_\alpha\}$ is a bounded set in $L^\infty$ norm, it is precompact in $L^1$ norm by Sobolev embedding.
So there exists a $L^1$ convergent sequence $\{\varphi_i\}$, and
\begin{equation}
\int_M |\varphi_i - \varphi_j |^{q'} d\, vol
\leq
\Vert \varphi_i - \varphi_j \Vert^{q' - 1}_{L^\infty} \int_M |\varphi_i - \varphi_j| d\, vol
\leq
C  \int_M |\varphi_i - \varphi_j| d\, vol
\to
0
,
\end{equation}
as $i,j $ approach $\infty$ for any $1 \leq q' < + \infty$.

\end{proof}

With the estimates in Section~\ref{L-estimate}, Section~\ref{S-estimate} and Theorem~\ref{theorem-5-1}, we can construct a continuous limit function pair with respect to Equation~\eqref{equation-true}.
For nonnegative function $e^f \in L^{qn} (M)$ with $q > 1$, there is a sequence  $\{e^{f_i}\}$ of smooth positive functions such that
\begin{equation}
\Vert e^{f_i} - e^f\Vert_{L^{nq}} < \frac{1}{2^i} .
\end{equation}
Then we can estimate
\begin{equation}
	\Vert e^{n f_i} \Vert_{L^q}
	=
	\Vert e^{f_i} \Vert^n_{L^{nq}}
	\leq
	\left( \Vert e^{f_i} - e^f\Vert_{L^{nq}} + \Vert e^f \Vert_{L^{nq}}\right)^n
	<
	\left( \frac{1}{2^i} + \Vert e^f \Vert_{L^{nq}}\right)^n
	,
\end{equation}
\begin{equation}
\begin{aligned}
	\int_M e^{nf_i} d \,vol
	\leq
	\left( \Vert e^{f_i} - e^f\Vert_{L^n} + \Vert e^f \Vert_{L^n} \right)^n
	<
	\left(\frac{1}{2^{i}} (vol (M))^{\frac{q - 1}{n q}} + \Vert e^f \Vert_{L^n}\right)^n
	,
\end{aligned}
\end{equation}
and
\begin{equation}
\int_M e^{f_i} d\, vol
=
\int_M e^f d\, vol + \int_M \left(e^{f_i} - e^f\right) d\, vol
\geq
\int_M e^f d\, vol - \frac{1}{2^i} \, (vol (M))^{\frac{n q - 1}{n q}} .
\end{equation}
When $\int_M e^{f} d vol > 0$, we have
\begin{equation}
\label{inequality-5-7}
\Vert e^{nf_i} \Vert_{L^q} \leq 2 \Vert e^{n f} \Vert_{L^q} ,
\end{equation}
\begin{equation}
\label{inequality-5-8}
\int_M e^{nf_i} d\, vol \leq 2 \int_M e^{nf} d\, vol ,
\end{equation}
and
\begin{equation}
\label{inequality-5-9}
\int_M e^{f_i} d\, vol \geq \frac{1}{2} \int_M  e^f d\, vol ,
\end{equation}
for $i$ sufficiently large.

We can find out a pair of smooth function $\varphi_i$ and real constant $b_i$ such that
\begin{equation}
	\mathfrak{F} (\chi + \nabla^2 \varphi_i) = e^{b_i} e^{f_i}, \qquad \sup \varphi_i = 0.
\end{equation}
Combining Corollary~\ref{corollary-lower-bound}, Corollary~\ref{corollary-upper-bound},  \eqref{inequality-5-8} and \eqref{inequality-5-9}, we obtain
\begin{equation}
\label{b-5-11}
	\left(\frac{c_0 \epsilon^n}{2 C' (c,g) \int_M e^{nf} d\, vol}\right)^{\frac{1}{n}}
	\leq
	e^{b_i}
	<
	\frac{2 \frac{\partial\mathfrak{f}}{\partial\lambda_1} (\bm{1}) \int_M S_1 (\bm{\lambda } (\chi)) d\, vol + 2 \mathfrak{f} (\bm{1}) vol (M)}{\int_M e^f d\, vol}
	,
\end{equation}
for sufficiently large $i$.
By passing to a subsequence, $e^{b_i}$ can be assumed to be convergent to a constant $e^b$ between
$$
\left(\frac{c_0 \epsilon^n}{ 2 C' (c,g) \int_M e^{nf} d\, vol}\right)^{\frac{1}{n}}
\text{ and }
\frac{ 2 \frac{\partial\mathfrak{f}}{\partial\lambda_1} (\bm{1}) \int_M S_1 (\bm{\lambda } (\chi)) d\, vol +  2 \mathfrak{f} (\bm{1}) vol (M)}{\int_M e^f d\, vol}
.
$$
Then we can do the estimation
\begin{equation}
\begin{aligned}
	\left\Vert e^{b_i} e^{f_i} - e^b e^f \right\Vert_{L^{nq}}
	&\leq
	e^{b_i} \Vert e^{f_i} - e^f\Vert_{L^{nq}} + \left| e^{b_i} - e^b \right| \Vert e^f \Vert_{L^{nq}}
	\\
	&<
	\frac{ \frac{\partial\mathfrak{f}}{\partial\lambda_1} (\bm{1}) \int_M S_1 (\bm{\lambda } (\chi)) d\, vol +  \mathfrak{f} (\bm{1}) vol (M)}{2^{i - 1} \int_M e^f d\, vol}
	+
	\left| e^{b_i} - e^b \right| \Vert e^f \Vert_{L^{nq}},
\end{aligned}
\end{equation}
for sufficiently large $i$. Consequently, $e^{b_i} e^{f_i}$ is $L^{nq}$-convergent to $e^b e^f$.
As a consequence of Theorem~\ref{theorem-3-1}, $\{\varphi_i\}$ is uniformly bounded.

By Theorem~\ref{theorem-5-1}, $\{\varphi_i\}$ is convergent in $L^1$ norm by passing to a subsequence if necessary.
Then we can choose an increasing sequence $\{j_k\}$ such that $\forall i \geq j_k$,
\begin{equation}
	\Vert \varphi_i - \varphi_{j_k}\Vert_{L^1} \leq \frac{1}{2^{k\frac{nq + q - 1}{q - 1}}} .
\end{equation}
By the stability estimate Theorem~\ref{theorem-stability-1}, there is a constant $C > 0$ such that
\begin{equation}
	\Vert \varphi_i - \varphi_j\Vert_{L^\infty} \leq C \Vert \varphi_i - \varphi_j\Vert^{\frac{q - 1}{n q + q - 1}}_{L^1} .
\end{equation}
Therefore,
\begin{equation}
\label{inequality-5-15}
	\Vert \varphi_{j_l} - \varphi_{j_k}\Vert_{L^\infty} \leq \frac{C}{2^k} , \qquad\forall\, l \geq k,
\end{equation}
which implies that $\{\varphi_{j_k}\}$ is uniformly convergent to a continuous function $\varphi$. Moreover, $\varphi$ is also a $W^{1,2}$ function by Alaoglu's Theorem.

\medskip
\subsection{The uniqueness of constant $b$}
We shall show that the constant $e^b$ is unique for a fixed $e^f$.
\begin{theorem}

Assume that we have two smooth approximation sequences $\{(\varphi_i, b_i, f_i)\}$ and $\{(\tilde \varphi_i , \tilde b_i, \tilde f_i)\}$ satisfying
\begin{equation}
\label{equation-5-16}
\mathfrak{F} (\chi + \nabla^2 \varphi_i) = e^{b_i} e^{f_i} , \qquad \sup \varphi_i = 0 ,
\end{equation}
and
\begin{equation}
\label{equation-5-17}
\mathfrak{F} (\chi + \nabla^2 \tilde \varphi_i) = e^{\tilde b_i} e^{\tilde f_i} , \qquad \sup \tilde \varphi_i = 0 ,
\end{equation}
such that as $i \to \infty$
\begin{equation}
\Vert e^{f_i} -  e^f\Vert_{L^n} \to 0, \qquad \Vert e^{\tilde f_i} -  e^f\Vert_{L^n} \to 0
\end{equation}
and
\begin{equation}
b_i \to b, \qquad \tilde b_i \to \tilde  b ,
\end{equation}
for some constants $b, \tilde b \in \mathbb{R}$. Then $b = \tilde b$.
\end{theorem}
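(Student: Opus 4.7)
My plan is to argue by contradiction: assume $b \neq \tilde b$, WLOG $b > \tilde b$, and derive an impossibility by combining Theorem~\ref{theorem-5-1}, Theorem~\ref{theorem-stability-1}, and the smooth comparison principle at extrema. First, by passing to subsequences, one can ensure, via Theorem~\ref{theorem-5-1} together with the Cauchy-in-$L^\infty$ argument of \eqref{inequality-5-15}, that $\varphi_i \to \varphi$ and $\tilde\varphi_i \to \tilde\varphi$ uniformly for some $\varphi, \tilde\varphi \in C^0(M)$ with $\sup = 0$. Corollary~\ref{corollary-upper-bound} and Corollary~\ref{corollary-lower-bound}, together with $\int_M e^f\,dvol > 0$ and $\int_M e^{nf}\,dvol < \infty$, provide uniform two-sided bounds on $e^{b_i}$ and $e^{\tilde b_i}$, so $\|e^{n(b_i + f_i)}\|_{L^q}$ and $\|e^{n(\tilde b_i + \tilde f_i)}\|_{L^q}$ remain uniformly controlled.

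Applying Theorem~\ref{theorem-stability-1} symmetrically to the pair $(\varphi_i, \tilde\varphi_i)$ then yields, for every $p > 0$,
\[
\|\varphi_i - \tilde\varphi_i\|_{L^\infty} \leq C\,\|\varphi_i - \tilde\varphi_i\|_{L^p}^{\frac{p(q-1)}{nq + p(q-1)}}
\]
with $C$ independent of $i$. In parallel, smoothness of $\varphi_i$ and $\tilde\varphi_i$ together with monotonicity of $\mathfrak{F}$ at a maximum point $x_i$ of $\varphi_i - \tilde\varphi_i$ gives $e^{b_i + f_i}(x_i) \leq e^{\tilde b_i + \tilde f_i}(x_i)$, whence $b_i - \tilde b_i \leq \tilde f_i(x_i) - f_i(x_i)$; the reverse inequality holds at a minimum point $y_i$. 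If $b > \tilde b$, then $\tilde f_i(x_i) - f_i(x_i) \geq (b - \tilde b)/2$ for $i$ large; the goal is to combine this pointwise statement with the integrated stability bound to reach a contradiction.

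The main obstacle lies in closing this gap: the pointwise comparison at $x_i$ only constrains $\tilde f_i - f_i$ at a single (possibly degenerating) point, whereas the $L^n$ convergence $e^{f_i}, e^{\tilde f_i} \to e^f$ is an integrated statement, and $\tilde f_i - f_i$ need not be bounded in $L^\infty$ wherever $e^f$ vanishes. To bridge this, I would exploit $\int_M e^f\,dvol > 0$ to isolate a set $E = \{e^f \geq \delta\}$ of positive measure on which $e^{f_i}$ and $e^{\tilde f_i}$ are quantitatively bounded below for $i$ large, and then propagate the pointwise estimate at $x_i$ to $E$ using the $L^\infty$-to-$L^p$ stability comparison above. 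An alternative route is to integrate the concavity lower bound \eqref{inequality-3-17} both for $\varphi_i$ and for $\tilde\varphi_i$ against test functions supported in $E$, using $\int_M \Delta(\varphi_i - \tilde\varphi_i)\,dvol = 0$ to force $e^{b_i} \int_E e^{f_i} - e^{\tilde b_i} \int_E e^{\tilde f_i}$ to vanish in the limit, which in combination with $L^n$-convergence of the right-hand sides yields $b = \tilde b$.
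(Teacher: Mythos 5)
Your opening steps are sound: the smooth maximum--principle comparison at an extremum of $\varphi_i - \tilde\varphi_i$ does give $b_i - \tilde b_i \leq \tilde f_i(\bm{x}_i) - f_i(\bm{x}_i)$, and you correctly identify that this is a pointwise statement at a single (possibly degenerating) point, whereas the hypothesis only gives $L^n$ convergence of $e^{f_i}$ and $e^{\tilde f_i}$. But you never actually close this gap; the proposal stops at ``the goal is to combine\ldots to reach a contradiction'' and then offers two speculative routes, neither of which is carried out and neither of which appears to work. For the set $E = \{e^f \geq \delta\}$ route, the stability estimate controls $\|\varphi_i - \tilde\varphi_i\|_{L^\infty}$ in terms of $\|\varphi_i - \tilde\varphi_i\|_{L^p}$, which says nothing directly about $\tilde f_i - f_i$, and there is no reason for $\bm{x}_i$ to lie in $E$; I see no mechanism to ``propagate'' a pointwise bound on the data from $\bm{x}_i$ to $E$. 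For the second route, the concavity inequality \eqref{inequality-3-17} is one-sided, so integrating it for $\varphi_i$ and for $\tilde\varphi_i$ gives two lower bounds of the same sign on $\Delta\varphi_i$ and $\Delta\tilde\varphi_i$ separately; subtracting does not yield a bound on $\Delta(\varphi_i - \tilde\varphi_i)$, and integrating against a test function supported in $E$ does not make $\int \Delta(\varphi_i - \tilde\varphi_i)$ vanish (only integration against a constant on all of $M$ does). So ``forcing $e^{b_i}\int_E e^{f_i} - e^{\tilde b_i}\int_E e^{\tilde f_i}$ to vanish'' is not justified.

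The paper's proof avoids all pointwise control of the data by a different mechanism. From concavity of $\mathfrak{F}$ and \eqref{inequality-2-6} it writes, for $0 < r < 1/2$,
\[
\frac{1}{r}\bigl(e^{b_i}e^{f_i} - (1-r)e^{\tilde b_i}e^{\tilde f_i}\bigr) \geq n c^{1/n} S_n^{1/n}\Bigl(\bm{\lambda}\bigl(\sigma g + \tfrac{1}{r}\nabla^2(\varphi_i - (1-r)\tilde\varphi_i)\bigr)\Bigr)
\]
wherever the argument lies in $\bar\Gamma^n$, and then runs the ABP maximum principle (Proposition~\ref{proposition-abp}) at the minimum of $\frac{1}{r}(\varphi_i - (1-r)\tilde\varphi_i)$ to deduce
\[
c_0\epsilon^n \leq C'(c,g)\int_M \left(\frac{1}{r}\bigl(e^{b_i}e^{f_i} - (1-r)e^{\tilde b_i}e^{\tilde f_i}\bigr)^+\right)^n d\,vol.
\]
This is an integral over $M$ of an $n$-th power of the data, which is exactly the quantity controlled by the hypothesis $\|e^{f_i} - e^f\|_{L^n} \to 0$, $\|e^{\tilde f_i} - e^f\|_{L^n} \to 0$. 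If $b < \tilde b$, choosing $r = 1 - e^{b - \tilde b} \in (0,1)$ and letting $i \to \infty$ makes the integrand converge in $L^1$ to $\bigl(\frac{1}{r}(e^b e^f - (1-r)e^{\tilde b}e^f)^+\bigr)^n \equiv 0$, contradicting $c_0\epsilon^n > 0$. The ``magic'' is precisely this choice of $r$, which your argument has no analogue of; it converts the hypothesis into a statement about an integral that must be bounded below but tends to zero. I would recommend you abandon the pointwise comparison-at-extrema route and instead adapt the ABP/stability machinery from Section~\ref{S-estimate}, which is built to work with $L^n$-type control of the data.
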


\begin{proof}

From Equality~\eqref{equation-5-16} and \eqref{equation-5-17}, we derive that for $0 < r < \frac{1}{2}$,
\begin{equation}
\begin{aligned}
e^{b_i} e^{f_i}
	&\geq
	(1 - r) \mathfrak{F} (\chi + \nabla^2 \tilde \varphi_i) + r \mathfrak{F} \left(\chi + \frac{1}{r} \nabla^2 \left(\varphi_i - (1 - r) \tilde \varphi_i\right)\right)
	\\
	&\geq
	(1 - r) e^{\tilde b_i} e^{\tilde f_i}
	+
	r  n c^{\frac{1}{n}}  S^{\frac{1}{n}}_n \left(\bm{\lambda} \left(\sigma g + \frac{1}{r} \nabla^2 \left(\varphi_i - (1 - r) \tilde \varphi_i \right)\right) \right)
	,
\end{aligned}
\end{equation}
wherever $\sigma g + \frac{1}{r} \nabla^2 \left(\varphi_i - (1 - r) \tilde \varphi_i \right) \geq 0$.
Then we have
\begin{equation}
\frac{1 }{r} \left(e^{b_i} e^{f_i} -  (1 - r) e^{\tilde b_i} e^{\tilde f_i}\right)
\geq
 n c^{\frac{1}{n}}   S^{\frac{1}{n}}_n \left(\bm{\lambda} \left(\sigma g + \frac{1}{r} \nabla^2 \left(\varphi_i - (1 - r) \tilde \varphi_i \right)\right) \right)
,
\end{equation}
for $0 < r < \frac{1}{2}$.
As previous,  let $\bm{x}_0$ be the minimum point of $\frac{1}{r} \left(\varphi_i - (1 - r) \tilde \varphi_i \right)$ on $M$,
and hence define
\begin{equation*}
  u (\bm{x})
  :=
  \frac{1}{r} \left(\varphi_i (\bm{x}) - (1 - r) \tilde \varphi_i (\bm{x})\right)
  -
  \frac{1}{r} \left(\varphi_i (\bm{0}) - (1 - r) \tilde \varphi_i (\bm{0})\right)
  +
  \epsilon |\bm{x}|^2 ,
\end{equation*}
and
\begin{equation*}
  P
  :=
  \left\{ \bm{x} \in B (\bm{0} ,1) \; \bigg| \;   |D \varphi (\bm{x})| < \frac{\epsilon}{2} ,\;  u (\bm{y}) \geq u (\bm{x}) + D \varphi (\bm{x}) \cdot (\bm{y} - \bm{x})  \;\;\forall \bm{y} \in B (\bm{0} , 1)\right\}
  .
\end{equation*}
Similar to \eqref{inequality-4-9}, we obtain that
\begin{equation}
\label{inequality-5-20}
\begin{aligned}
c_0 \epsilon^n
&\leq
C' (c, g) \int_P \left(  \frac{1 }{r} \left(e^{b_i}e^{f_i} - (1 - r) e^{\tilde b_i} e^{\tilde f_i} \right) \right)^n d vol
\\
&\leq
C' (c, g) \int_M \left(  \frac{1 }{r} \left(e^{b_i} e^{f_i} - (1 - r) e^{\tilde b_i} e^{\tilde f_i} \right)^+ \right)^n d vol
.
\end{aligned}
\end{equation}

If $b < \tilde b$,
we may choose $r = 1 - e^{b - \tilde b} > 0$ and let $i \to \infty$ in \eqref{inequality-5-20},
\begin{equation}
\begin{aligned}
c_0 \epsilon^n
&\leq
C' (c, g) \int_M \left(\frac{1}{r} \left(e^b e^{f} - (1 - r) e^{\tilde b} e^{ f}\right)^+\right)^n d vol
= 0
,
\end{aligned}
\end{equation}
which is a contradiction.
Therefore, it has to be that $b = \tilde b$.

\end{proof}


\medskip
\subsection{Weak $C^0$ solution}


Adapting the definition from Chen~\cite{Chen2000} (see also Yuan~\cite{Yuan2022}), we can say that the limit function pair is a weak $C^0$ solution.
\begin{definition}
\label{definition-5-2}
A continuous function $\varphi \in C(M)$ is a weak $C^0$ solution to  Equation~\eqref{equation-hessian} with respect to $L^{nq}$ norm,
if for any $\xi > 0$, there exists a $C^2$ function $\varphi_\xi$ and a continuous function ${f_\xi}$ such that
\begin{equation}
|\varphi - \varphi_\xi| < \xi
\end{equation}
and
\begin{equation}
\mathfrak{F} (\chi + \nabla^2 \varphi_\xi) = e^{f_\xi} > 0 ,
\end{equation}
where $\left\Vert e^{f_\xi} - e^f\right\Vert_{L^{nq}} \to 0$ as $\xi \to 0+$ with $q > 1$.

\end{definition}

From \eqref{b-5-11}, we can see that the term on the right hand side of Equation~\eqref{equation-true} satisfies that
\begin{equation}
\begin{aligned}
&\quad
	\left(\frac{c_0 \epsilon^n}{2 C' (c,g) }\right)^{\frac{1}{n}} \frac{e^f}{\left(vol (M)\right)^{\frac{q - 1}{q}}\Vert e^f \Vert_{L^{nq}}}
	\leq
	e^{b}  e^f
	\\
	&
	\qquad
	<
	2	\left( \frac{\partial\mathfrak{f}}{\partial\lambda_1} (\bm{1}) \Vert S_1 (\bm{\lambda } (\chi)) \Vert + \mathfrak{f} (\bm{1}) vol (M)\right) \frac{e^f}{\Vert e^f \Vert_{L^1}} ,
\end{aligned}
\end{equation}
if $e^f \in L^{nq} (M)$ and $\int_M e^f d\, vol > 0$.
Then we have
\begin{equation}
\begin{aligned}
\left(\frac{c_0 \epsilon^n}{2 C' (c,g) }\right)^{\frac{1}{n}} \frac{1}{(vol (M))^{\frac{q - 1}{q}}}
\frac{\Vert e^f \Vert_{L^1}}{\Vert e^f \Vert_{L^{nq}}}
&\leq
\Vert e^b e^f\Vert_{L^1} \\
&\leq 2	\left( \frac{\partial\mathfrak{f}}{\partial\lambda_1} (\bm{1}) \Vert S_1 (\bm{\lambda } (\chi)) \Vert + \mathfrak{f} (\bm{1}) vol (M)\right) ,
\end{aligned}
\end{equation}
and
\begin{equation}
\begin{aligned}
\left(\frac{c_0 \epsilon^n}{2 C' (c,g) }\right)^{\frac{1}{n}} \frac{1}{(vol (M))^{\frac{q - 1}{q}}}
&\leq
\Vert e^b e^f\Vert_{L^{nq}}
\\
&\leq
2	\left( \frac{\partial\mathfrak{f}}{\partial\lambda_1} (\bm{1}) \Vert S_1 (\bm{\lambda } (\chi)) \Vert + \mathfrak{f} (\bm{1}) vol (M)\right) \frac{\Vert e^f \Vert_{L^{nq}}}{\Vert e^f \Vert_{L^1}}
.
\end{aligned}
\end{equation}
We can adapt the argument for complex Monge-Amp\`ere equation~\cite{Kolodziej2005} to show that weak $C^0$ solutions are equicontinuous.
\begin{theorem}
For constant $K > 0$, let
$$
R_K := \left\{e^{f} \in L^{nq} \;\bigg|\; \frac{\Vert e^f\Vert_{L^{nq}}}{\Vert e^f\Vert_{L^1} } < K\right\} .
$$
Then the solution set to Equation~\eqref{equation-true} with  the term $e^f\in R_K$ on the right hand side are equicontinuous on $M$.
\end{theorem}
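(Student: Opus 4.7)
The plan is to show that the family $\mathcal{F}_K$ of solutions to Equation~\eqref{equation-true} with right-hand sides $e^f \in R_K$ is precompact in $C^0(M)$; equicontinuity then follows from the Arzel\`a--Ascoli theorem. The strategy combines a uniform $L^\infty$ bound with an upgrade of $L^p$-convergence to uniform convergence via the stability estimate already established.

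First I would rewrite Equation~\eqref{equation-true} as $\mathfrak{F}(\chi + \nabla^2 \varphi) = e^F$ with $e^F := e^b e^f$. The two displays immediately preceding the theorem show that the hypothesis $e^f \in R_K$ translates into a uniform bound $\Vert e^F \Vert_{L^{nq}} \leq C(K)$ depending only on $K$ and on geometric data, independently of any scaling normalization of $e^f$. Applying Theorem~\ref{theorem-3-1} to this reformulation then yields a uniform bound $\Vert \varphi \Vert_{L^\infty} \leq C'(K)$ for every $\varphi \in \mathcal{F}_K$.

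Next, given any sequence $\{\varphi_i\} \subset \mathcal{F}_K$ with right-hand sides $e^{F_i}$, Theorem~\ref{theorem-5-1} (applied to the smooth approximations constructed in Subsection~\ref{limit-function}) produces a subsequence converging in $L^p$ for every $1 \leq p < \infty$. Since $\Vert e^{n F_i} \Vert_{L^q} = \Vert e^{F_i} \Vert_{L^{nq}}^n$ is uniformly bounded, Theorem~\ref{theorem-stability-1} applied to each pair $(\varphi_i, \varphi_j)$ in both directions yields
\[
\Vert \varphi_i - \varphi_j \Vert_{L^\infty} \leq C \, \Vert \varphi_i - \varphi_j \Vert_{L^p}^{\frac{p(q-1)}{nq + p(q-1)}}
\]
with $C$ uniform over $\mathcal{F}_K$. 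Thus any $L^p$-Cauchy subsequence in $\mathcal{F}_K$ is automatically $L^\infty$-Cauchy, hence converges uniformly to a continuous limit, which establishes precompactness in $C^0(M)$.

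The main technical obstacle is ensuring that all constants coming from Theorem~\ref{theorem-3-1} and Theorem~\ref{theorem-stability-1} are genuinely uniform across $\mathcal{F}_K$; this relies precisely on the ratio bound embedded in $R_K$, which was designed to control $\Vert e^F \Vert_{L^{nq}}$ in a scaling-invariant manner. A secondary point is that members of $\mathcal{F}_K$ are a priori only weak $C^0$ solutions, whereas both theorems apply to $C^2$ solutions; the estimates transfer to the weak setting by applying them to the smooth approximating sequences of Subsection~\ref{limit-function} and passing to the uniform limit, which poses no genuine difficulty since these approximations satisfy the same uniform bounds and since the uniform $C^0$ closure of an equicontinuous family remains equicontinuous.
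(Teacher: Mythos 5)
Your proposal is correct and uses essentially the same ingredients as the paper: a uniform $L^\infty$ bound coming from the scaling-invariant ratio built into $R_K$, the $L^1$-precompactness of Theorem~\ref{theorem-5-1}, and the stability estimate of Theorem~\ref{theorem-stability-1} to upgrade $L^p$-Cauchy to $L^\infty$-Cauchy. The only difference is presentational: you package the conclusion as precompactness in $C^0(M)$ followed by the ``relatively compact implies equicontinuous'' direction of Arzel\`a--Ascoli, whereas the paper runs the standard proof of that direction inline as a contradiction argument (choosing sequences $(\varphi_i, \bm{x}_i, \bm{y}_i)$ with $d(\bm{x}_i,\bm{y}_i)\to 0$ but $\varphi_i(\bm{x}_i)-\varphi_i(\bm{y}_i)>\delta$, extracting a uniformly convergent subsequence via the same three ingredients, and deriving a contradiction). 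These are the same argument in different clothing, so your approach is sound.
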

\begin{proof}

According to Definition~\ref{definition-5-2}, we can see that the weak $C^0$ solution is a limit of a uniformly convergent sequence of  smooth functions. Without loss of generality, we may assume that the terms on the right hand side are positive and smooth.

Assume that for some $\delta > 0$, there is a sequence $(\varphi_i , \bm{x}_i,\bm{y}_i)$ so that
\begin{equation*}
d (\bm{x}_i , \bm{y}_i) < \frac{1}{i} ,
\end{equation*}
and
\begin{equation*}
\varphi_i (\bm{x}_i) - \varphi_i (\bm{y}_i) > \delta .
\end{equation*}
When $e^f \in R_K$, the solutions are uniformly bounded in $L^\infty$. By Theorem~\ref{theorem-5-1} and the compactness of $M$, it is possible to choose a subsequence $(\varphi_{i_j} , \bm{x}_{i_j}, \bm{y}_{i_j})$ such that there is a point $\bm{z} \in M$ and a function  $\varphi$ satisfying that
$\varphi_{i_j} \to \varphi $ in $L^1 $,
$\bm{x}_{i_j} \to \bm{z} $ and $\bm{y}_{i_j} \to \bm{z} $,
as $j \to +\infty$.
By Theorem~\ref{theorem-stability-1}, $\varphi_{i_j}$ converges uniformly, and thus $\varphi$ is continuous. 
Therefore, 
\begin{equation*}
\begin{aligned}
	\left|\varphi_{i_j} (\bm{x}_{i_j}) - \varphi_{i_j} (\bm{y}_{i_j})\right|
	&\leq \left| \varphi_{i_j} (\bm{x}_{i_j}) - \varphi (\bm{x}_{i_j}) \right| + \left| \varphi (\bm{x}_{i_j}) - \varphi(\bm{z})  \right| \\
	&\quad + \left|  \varphi (\bm{z}) - \varphi (\bm{y}_{i_j})  \right| + \left| \varphi (\bm{y}_{i_j}) - \varphi_{i_j} (\bm{y}_{i_j})\right|
	\\
	&\leq 2 \Vert \varphi - \varphi_{i_j}\Vert_{L^\infty}  + \left| \varphi (\bm{x}_{i_j}) - \varphi(\bm{z})  \right| + \left|  \varphi (\bm{z}) - \varphi (\bm{y}_{i_j})  \right| \to 0 ,
\end{aligned}
\end{equation*}
which is a contradiction.

\end{proof}

\medskip
\subsection{Viscosity solution}

The viscosity solution can be defined as follows.
\begin{definition}
Let $\varphi \in C(M)$ and $e^{f} \in C(M)$.

Function $\varphi$ is a viscosity subsolution to Equation~\eqref{equation-hessian} if for any point $\bm{x} \in M$ and any $C^2$ function $u$ such that $u - \varphi$ has a local minimum at $\bm{x}$, it holds true that at $\bm{x}$
\begin{equation}
	\mathfrak{f} \left(\bm{\lambda} (\chi + \nabla^2 u)\right) \geq e^{f} .
\end{equation}

Function $\varphi$ is a viscosity supersolution to Equation~\eqref{equation-hessian} if for any point $\bm{x} \in M$ and any $C^2$ function $u$ such that $u - \varphi$ has a local maximum at $\bm{x}$, one of the following holds true that at $\bm{x}$
\begin{equation}
	\bm{\lambda} (\chi + \nabla^2 u) \not\in \Gamma ,
\end{equation}
or
\begin{equation}
	\bm{\lambda} (\chi + \nabla^2 u) \in \Gamma \qquad \text{ and } \qquad \mathfrak{f} \left(\bm{\lambda} (\chi + \nabla^2 u)\right) \leq e^{f} .
\end{equation}

Function $\varphi$ is a viscosity solution to Equation~\eqref{equation-hessian} if it is both a viscosity subsolution and a viscosity supersolution.

\end{definition}

When $e^f \in C(M)$ is not identically $0$, we can construct a positive smooth approximation sequence $\{e^{f_i}\}$ of $e^f$ in $C^0$ norm.
As in Subsection~\ref{limit-function}, we can construct a limit function pair $(\varphi,b)$.
According to the standard argument in \cite{Lions1983}, $(\varphi,b)$ is a viscosity solution pair to Equation~\ref{equation-true}.
The proof of Theorem~\ref{main-theorem} is complete.

\medskip
\section{Lipschitz estimate}
\label{Interior gradient estimate}

In this section, we shall investigate the Lipschitz estimate when the term $e^f$ on the right hand side is Lipschitz.

\medskip
\subsection{Interior gradient estimate on Riemannian manifolds}

We shall study the interior gradient estimate for a more general form of equations,
\begin{equation}
\label{equation-6-1}
\mathfrak{F} \left(\chi + \nabla^2 \varphi\right) = \mathfrak{f} \left(\bm{\lambda} (\chi + \nabla^2 \varphi)\right) = \psi \left( \bm{x} , \varphi (\bm{x} ) , d\varphi(\bm{x} ) \right) ,
\end{equation}
for a $C^1$ function $\psi$ on $M \times \mathbb{R} \times T^* M$.
We assume here that $\mathfrak{f}$ satisfies the following conditions:
\begin{enumerate}[(i)]

\item $\mathfrak{f} (\bm{\lambda})$ is a function in $ C^1 (\Gamma) $,
where $\Gamma \subset \Gamma^1 := \{\bm{\lambda} \in \mathbb{R}^n \,| \, \lambda_1 + \cdots + \lambda_n > 0  \}$ is an open symmetric convex domain
;

\item $\mathfrak{f} (\bm{\lambda})$ is invariant under permutations of the components of $\bm{\lambda}$;

\item $\frac{\partial \mathfrak{f}}{\partial \lambda_i} \geq 0$, for all $1\leq i\leq n$ and $\bm{\lambda} \in \Gamma$;

\item $\sum^n_{i = 1} \frac{\partial \mathfrak{f}}{\partial \lambda_i} (\bm{\lambda}) \lambda_i \geq 0$, for all $\bm{\lambda} \in \Gamma$;

\item $\lim_{t \to +\infty} \mathfrak{f} (t\bm{1}) > \sup_{B_r (\bm{0})} \psi$;

\item $\mathfrak{f} (\bm{\lambda})$ is concave.
\end{enumerate}
These conditions are naturally satisfied by many equations.
It is easy to see that Condition (i)-(iii), (v) and (vi) can be derived from the settings of Equation~\ref{equation-hessian}.

%
%
%
%
%
%
%
%
%
%
%
%
%
%
From Condition (v), there are constants $L > 0$, $\epsilon > 0$ depending on $\sup_{B_r(0)} \psi $ such that
\begin{equation*}
	\mathfrak{f} (L, \cdots , L) > \sup_{B_r (0)} \psi + \epsilon .
\end{equation*}
When $\bm{\lambda} (\chi + \nabla^2 \varphi) \in \Gamma$, Condition (vi) implies that
\begin{equation*}
	- \sum_{i,j} \frac{\partial \mathfrak{F} }{\partial \varphi_{ij}} \left(\chi_{ij} + \nabla^2_{ij} \varphi\right)
	\geq  - L \sum_{i,j} \frac{\partial \mathfrak{F}}{\partial \varphi_{ij}} g_{ij} + \mathfrak{F} (Lg) - \mathfrak{F} (\chi + \nabla^2 \varphi)
	> - L \sum_{i,j} \frac{\partial \mathfrak{F}}{\partial \varphi_{ij}} g_{ij} + \epsilon,
\end{equation*}
and hence by Condition (iv),
\begin{equation}
\label{V-2}
	\sum_{i,j} \frac{\partial \mathfrak{F}}{\partial \varphi_{ij}} g_{ij} > \frac{\epsilon}{L} .
\end{equation}
%
%

%
%
When $\bm{\lambda} (\chi + \nabla^2 \varphi) \in \partial \Gamma$, we further impose an extra assumption that
there is a strictly increasing  function $h\in C^1 (\mathbb{R})$
and a  function $\mathfrak{\tilde f} \in C^1(\bar\Gamma\setminus ( \bar\Gamma^n \cap \partial \Gamma))$
such that
\(
	\mathfrak{\tilde f} (\bm{\lambda}) = h \circ \mathfrak{f} (\bm{\lambda})
\)
in $\Gamma$
and
\begin{equation*}
\label{g-3}
	\sum_i \frac{\partial \mathfrak{\tilde f}}{\partial \lambda_i} > 0, \qquad \text{when } \bm{\lambda} \in \partial\Gamma\setminus \bar\Gamma^n.
\end{equation*}
Equation~\eqref{equation-6-1} is equivalent to
\begin{equation}
\label{equivalent-equation}
	\mathfrak{\tilde F} (\chi + \nabla^2 \varphi) = h \circ \psi (x, \varphi , d\varphi) ,
\end{equation}
where
\begin{equation*}
	\mathfrak{\tilde F} (X) =  \limsup_{\substack{ \bm{\lambda} \in \Gamma \\ \bm{\lambda} \rightarrow \bm{\lambda} (X) } }  \mathfrak{\tilde f} (\bm{\lambda} (X)) .
\end{equation*}
%
%
%
%
%
%
%
For more details and discussions on the structure conditions, we refer the readers to \cite{SuiSun20}.
However, we do not need the condition that
$\frac{\partial \mathfrak{f}}{\partial \lambda_j} > \delta \sum_i \frac{\partial \mathfrak{f}}{\partial \lambda_i}$ when $\lambda_j < 0$, for some $\delta > 0 $.

In local coordinates $(x^1, \cdots, x^n)$, denote $\omega = \sum_i p_i d x^i \in T^*_{\bm{x}} M$. We can express $\psi$ as $\psi (x^1 , \cdots , x^n, t , p_1, \cdots , p_n)$.
For convenience, we may express
\begin{equation*}
\begin{aligned}
\partial_i &:= \frac{\partial}{\partial x^i} , \\
\varphi_i &:= \nabla_{\partial_i} \varphi = \frac{\partial \varphi}{\partial x^i} ,\\
\varphi_{ij} &:= \nabla^2_{\partial_i\partial_j}  \varphi = \Big<\nabla^2 \varphi , \frac{\partial}{\partial x^i}\otimes \frac{\partial}{\partial x^j}\Big> , \\
\varphi_{ijk} &:= \nabla^3_{\partial_i\partial_j\partial_k} \varphi = \Big<\nabla^3 \varphi , \frac{\partial}{\partial x^i} \otimes \frac{\partial}{\partial x^j}  \otimes \frac{\partial}{\partial x^k}\Big> ,\\
& \cdots \cdots
\end{aligned}
\end{equation*}
We define one form
$$
d^H \psi (\bm{x}, t , \omega) := \sum_i d^H_{x^i} \psi (\bm{x}, t, \omega) d x^i
$$
and $(1,0)$ tensor
$$
\frac{\partial \psi}{\partial \omega} := \sum_j \frac{\partial \psi}{\partial p_j} \frac{\partial}{\partial x^j} ,
$$
where
\begin{equation*}
	d^H_{x^i} \psi (\bm{x}, t, \omega)
	:= \frac{\partial \psi}{\partial x^i} + \sum_{j} \frac{\partial \psi}{\partial p_j} \Big< \nabla_{\frac{\partial}{\partial x^i}} \frac{\partial}{\partial x^j}, \omega\Big>
	= \frac{\partial \psi}{\partial x^i} + \sum_{j,k} \frac{\partial \psi}{\partial p_j} \Gamma^k_{ij} p_k.
\end{equation*}
Therefore we have
\begin{equation*}
\begin{aligned}
	\frac{\partial }{\partial x^i} \psi (\bm{x},\varphi , d \varphi) &= \frac{\partial \psi}{\partial t} \varphi_i +  \frac{\partial \psi}{\partial x^i} + \sum_j \frac{\partial \psi}{\partial p_j} \frac{\partial^2 \varphi}{\partial x^j \partial x^i} \\
	&= \frac{\partial \psi}{\partial t} \varphi_i +  d^H_{x^i} \psi + \sum_j \frac{\partial \psi}{\partial p_j} \varphi_{ji} .
\end{aligned}
\end{equation*}
We can verify that $d^H \psi (\bm{x}, t, \omega)$ does not depend on the choice of coordinates.
If we induce another coordinate chart $\bm{y} = (y^1 , \cdots ,y^n)$, there is a new coordinate chart $(\bm{y},\bm{q})$ for $T^* M$.  The transition map is
\begin{equation*}
	(y^1, \cdots , y^n , q_1 , \cdots , q_n) = \left(y^1 (\bm{x}) , \cdots , y^n (\bm{x}), \sum_j p_j \frac{\partial x^j}{\partial y^1} , \cdots ,  \sum_j p_j \frac{\partial x^j}{\partial y^n}\right) ,
\end{equation*}
and hence $\frac{\partial q^m}{\partial p_j} = \frac{\partial x^j}{\partial y^m}$.
Then we can derive that
\begin{equation*}
\begin{aligned}
	d^H_{x^i}\psi
	&=
	\frac{\partial}{\partial x^i} \left( \psi (\bm{x} , t , \bm{p}) \right)
	+
	\sum_{j} \frac{\partial \psi}{\partial p_j} \Big< \nabla_{\frac{\partial}{\partial x^i}} \frac{\partial}{\partial x^j} , \omega \Big>
	\\
	&=
	\frac{\partial}{\partial x^i} \left( \psi (\bm{y} , t , \bm{q}) \right)
	+
	\sum_{j,k,l,m} \frac{\partial \psi}{\partial q_m} \frac{\partial q_m}{\partial p_j} \frac{\partial y^k}{\partial x^i} \Big< \nabla_{\frac{\partial}{\partial y^k}} \frac{\partial y^l}{\partial x^j} \frac{\partial }{\partial y^l}, \omega \Big>
	\\
	&=
	\sum_j \frac{\partial y^j}{\partial x^i} \frac{\partial \psi}{\partial y^j} +  \sum_{j,k} \frac{\partial \psi}{\partial q_j} \frac{\partial}{\partial x^i} \left(\frac{\partial x^k}{\partial y_j}\right) p_k
	+
	\sum_{j,k,l,m} \frac{\partial \psi}{\partial q_m} \frac{\partial x^j}{\partial y^m}  \frac{\partial y^k}{\partial x^i}  \frac{\partial}{\partial y^k} \left(\frac{\partial y^l}{\partial x^j}\right) q_l
	\\
	&\qquad
	+
	\sum_{j,k,l,m} \frac{\partial \psi}{\partial q_m} \frac{\partial x^j}{\partial y^m} \frac{\partial y^k}{\partial x^i}  \frac{\partial y^l}{\partial x^j} \Big< \nabla_{\frac{\partial}{\partial y^k}} \frac{\partial }{\partial y^l}, \omega \Big>
	\\
	&=
	\sum_j \frac{\partial y^j}{\partial x^i} \left(\frac{\partial \psi}{\partial y^j} +  \sum_l \frac{\partial \psi}{\partial q_l}  \Big< \nabla_{\frac{\partial}{\partial y^j}} \frac{\partial }{\partial y^l}, \omega \Big>\right)
	\\
	&=
	\sum_j \frac{\partial y^j}{\partial x^i} d^H_{y^j} \psi
	.
\end{aligned}
\end{equation*}

We can derive the following interior gradient estimates for Equation~\eqref{equation-6-1} by adapting the argument of Chou and Wang~\cite{CW1}.
\begin{theorem}
\label{Theorem-6-1}
Let $(M, g)$ be a Riemannian manifold of dimension $n$ and $\chi$ a smooth symmetric $(0,2)$ tensor on $M$.
Also suppose that $B_r (\bm{x}_0) \subset M$ and $\varphi \in C^3 (B_r (\bm{x_0}))$
for $ r < \min\{ inj(\bm{x}_0) , d(\bm{x}_0, \partial M) \} $, where 
$inj(\bm{x}_0)$ is the injectivity radius at point $\bm{x}_0$. Let $\varphi$ be a  solution to Equation~\eqref{equation-6-1} satisfying
\begin{equation}
\label{condition-6}
	\left|d^H \psi (\bm{x}, \varphi , \omega) \right| + \left|\frac{\partial \psi}{\partial t} (\bm{x}, \varphi, \omega)\right| |\omega| + \left|\frac{\partial \psi}{\partial \omega} (\bm{x}, \varphi,\omega)\right| |\omega|^2 \leq h (|\omega|^3), \quad \text{as } |\omega| \rightarrow +\infty ,
\end{equation}
where $h(t) = o(t)$ as $t\rightarrow + \infty$.
Then we have
\begin{equation*}
	|\nabla \varphi (\bm{x}_0)| \leq C ,
\end{equation*}
where $C$ depends on $\epsilon$, $L$, $r$, $\sup_{B_r (\bm{x}_0)} | \varphi |$ and geometric data.
\end{theorem}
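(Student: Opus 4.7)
The plan is to follow the classical test-function approach of Chou--Wang, carefully adapted to the Riemannian and fully nonlinear setting. Consider the auxiliary function
\begin{equation*}
W(\bm{x}) \;=\; \eta(\bm{x})^2 \, e^{A(\varphi(\bm{x}) - \inf_{B_r}\varphi)} \, |\nabla \varphi(\bm{x})|^2,
\end{equation*}
where $\eta \in C_c^\infty(B_r(\bm{x}_0))$ is a standard cutoff with $\eta \equiv 1$ near $\bm{x}_0$ and $|\nabla \eta|^2 + |\nabla^2 \eta|$ controlled by $C/r^2$, and $A$ is a large constant to be chosen. If the maximum of $W$ is attained on the boundary there is nothing to prove, so assume it occurs at an interior point $\bar{\bm{x}} \in B_r$. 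Introduce normal coordinates at $\bar{\bm{x}}$ in which $\chi + \nabla^2 \varphi$ is diagonal, and denote the linearized operator by $\mathfrak{F}^{ij} = \partial\mathfrak{F}/\partial \varphi_{ij}$. From $\nabla W = 0$ at $\bar{\bm{x}}$ we obtain an identity expressing $\nabla |\nabla\varphi|^2$ algebraically in terms of $\nabla\eta$, $\nabla\varphi$, and $A$; plugging this back into the inequality $\mathfrak{F}^{ij} W_{ij} \leq 0$ eliminates the indefinite cross terms.

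Next, I would differentiate equation \eqref{equation-6-1} once along each coordinate direction to produce
\begin{equation*}
\mathfrak{F}^{ij} \varphi_{ijk} \;=\; d^H_{x^k}\psi + \psi_t\, \varphi_k + \sum_{l}\frac{\partial \psi}{\partial p_l}\varphi_{lk} - \mathfrak{F}^{ij}\chi_{ij,k},
\end{equation*}
contract with $2\varphi_k$, and commute covariant derivatives to compute $\mathfrak{F}^{ij}(|\nabla\varphi|^2)_{ij}$, with curvature terms controlled by the trace $\sum_i \mathfrak{F}^{ii}$. The concavity Condition (vi) lets me discard the resulting nonpositive third-order quadratic form (or absorb it as a Cauchy--Schwarz error against the $A\,\mathfrak{F}^{ii}\varphi_i^2$ term coming from the exponential factor). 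The key growth assumption \eqref{condition-6} with $h(t) = o(t)$ is designed exactly so that the contribution $|d^H\psi| + |\psi_t||\nabla\varphi| + |\partial_\omega\psi||\nabla\varphi|^2$ is dominated by $\delta |\nabla\varphi|^3$ for any prescribed $\delta$, provided $|\nabla\varphi|(\bar{\bm{x}})$ is taken sufficiently large. Condition (iv) yields $\sum_{i,j}\mathfrak{F}^{ij}(\chi_{ij} + \varphi_{ij}) \geq 0$, while \eqref{V-2} gives the uniform ellipticity trace $\sum_i \mathfrak{F}^{ii} \geq \epsilon/L$, and together these make the positive $A\,\mathfrak{F}^{ii}\varphi_i^2|\nabla\varphi|^2$ term coercive. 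Choosing $A$ large and then assuming for contradiction that $W(\bar{\bm{x}})$ exceeds a threshold depending only on $\epsilon,L,r,\sup|\varphi|$ and geometric data produces the desired contradiction, hence the bound on $W(\bm{x}_0)$ and therefore on $|\nabla\varphi(\bm{x}_0)|$.

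The main obstacle is twofold. First, when $\bm{\lambda}(\chi + \nabla^2\varphi)$ approaches $\partial\Gamma$ the coefficients $\mathfrak{F}^{ij}$ degenerate and the trace bound \eqref{V-2} can fail; to handle this I would switch to the reparametrized equation \eqref{equivalent-equation} using the composite $\tilde{\mathfrak{F}} = h\circ \mathfrak{F}$, whose linearization is strictly elliptic transversal to $\partial\Gamma \setminus \bar\Gamma^n$. The differentiation steps above then need to be redone carrying the extra factor $h'(\mathfrak{F})$, which modifies $\psi$ to $h\circ\psi$ and requires verifying that \eqref{condition-6} remains effective under this reparametrization. Second, keeping the manifold corrections consistent is delicate: the Christoffel-symbol contributions generated by commuting covariant derivatives must match exactly those in the invariant definition of $d^H\psi$ and $\partial\psi/\partial\omega$, which is why condition \eqref{condition-6} is phrased in the coordinate-free form verified earlier in the section. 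Once these two points are handled, the test-function computation proceeds in the standard manner and delivers the stated estimate with the claimed dependencies.
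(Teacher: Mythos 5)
Your high-level plan is in the spirit of the paper's Chou--Wang-style argument, but your choice of test function and your description of where the coercivity comes from both contain real errors that would derail the proof.

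\textbf{The test function is wrong.} Writing your $W=\eta^2 e^{A(\varphi-\inf\varphi)}|\nabla\varphi|^2$ in logarithmic form gives $\ln W=2\ln\eta+\tau(\varphi)+\ln|\nabla\varphi|^2$ with $\tau(\varphi)=A(\varphi-\inf\varphi)$, a \emph{linear} $\tau$. In this framework, after plugging the first-order critical-point identity $\tfrac{(|\nabla\varphi|^2)_i}{|\nabla\varphi|^2}=-\tfrac{2\eta_i}{\eta}-\tau'\varphi_i$ into the second-order condition, one produces a term
\begin{equation*}
\bigl(\tau''-2(\tau')^2\bigr)\sum_i\mathfrak{F}^{ii}\varphi_i^2
\end{equation*}
(plus cross terms with $\eta_i/\eta$). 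For your linear $\tau$ this coefficient is $-2A^2<0$, and $\sum_i\mathfrak{F}^{ii}\varphi_i^2$ can be as large as $|\nabla\varphi|^2\sum_i\mathfrak{F}^{ii}$. The only source of positivity at the same order is the term $\frac{1}{|\nabla\varphi|^2}\sum_{i,k}\mathfrak{F}^{ii}\varphi_{ki}^2$, which, after using the first-order information $X_{nn}\leq-\tau'|\nabla\varphi|^2+O(|\nabla\varphi|/\rho)$ together with concavity ($\mathfrak{F}^{nn}\geq\tfrac1n\sum_i\mathfrak{F}^{ii}$), yields roughly $\tfrac{(\tau')^2}{2n}|\nabla\varphi|^2\sum_i\mathfrak{F}^{ii}$. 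With $\tau'=A$, the good term has coefficient $\tfrac{A^2}{2n}$ while the bad one has coefficient $2A^2$; the net sign is negative and no choice of $A$, large or small, fixes this. The paper avoids the issue entirely by choosing $\tau(\varphi)=-\tfrac13\ln(2K-\varphi)$, for which $\tau''-2(\tau')^2=(\tau')^2>0$, so the offending quadratic form is nonnegative and can be dropped (after a Young inequality that also absorbs the cross terms into a harmless $-7\rho_i^2/\rho^2$). This specific convexity condition on $\tau$ is not a cosmetic detail; it is essential.

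\textbf{The coercivity mechanism you describe is not the right one.} You attribute the coercivity to a ``positive $A\,\mathfrak{F}^{ii}\varphi_i^2$ term coming from the exponential factor'' combined with Condition~(iv) and \eqref{V-2}. No such positive term exists: the exponential contributes $A\sum\mathfrak{F}^{ii}\varphi_{ii}$ (which Condition~(iv) only bounds \emph{below} by $-A\sum\mathfrak{F}^{ii}\chi_{ii}$, a bounded multiple of the trace, not a coercive term) and the $-A^2\sum\mathfrak{F}^{ii}\varphi_i^2$ term analysed above, which has the wrong sign. The actual coercive term in the paper is the second-derivative square $\frac{1}{|\nabla\varphi|^2}\sum_{i,k}\mathfrak{F}^{ii}\varphi_{ki}^2$, made large by the observation \eqref{term-1} that at the maximum point $X_{nn}\leq-\tfrac{1}{18K}|\nabla\varphi|^2$. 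You also invoke a ``nonpositive third-order quadratic form'' from concavity, but the equation is only differentiated once; there is no $\mathfrak{F}^{ij,kl}$ term in the correct computation. Your treatment of the degenerate case via the reparametrization $h\circ\mathfrak{f}$ and your remark on the coordinate-invariance of $d^H\psi$ are both correct and match the paper, but the core estimate as you have set it up would not close.
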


\begin{proof}


For simplicity, we may use $X$ and $\mathfrak{F}^{ij}$ to express $\chi + \nabla^2 \varphi$ and  $\frac{\partial \mathfrak{F}}{\partial X_{ij}}$ respectively.

Now we start to prove the interior gradient estimate by adapting the idea of Chou and Wang~\cite{CW1}. Let $\varphi \in C^3(B_r (\bm{0}))$ be a solution to Equation~\eqref{equation-6-1} and $B_r (\bm{0}) \subset M$. Let us consider the function
\begin{equation*}
\label{gradient-test-function}
	G(\bm{x}) := \frac{1}{2} \ln |\nabla \varphi|^2 + \tau (\varphi) + \ln \rho (\bm{x}) ,
\end{equation*}
where
\begin{equation*}
\tau(\varphi) = - \frac{1}{3} \ln (2K - \varphi), \qquad \text{for } K = \sup_{B_r(\bm{0})} |\varphi| ,
\end{equation*}
and
\begin{equation*}
\rho(\bm{x}) = \left(1 - \frac{d^2(\bm{x},\bm{0})}{r^2}\right)^+, \qquad \text{for } r < \min\{ inj(\bm{0}) , d(\bm{0}, \partial M) \} .
\end{equation*}
It is easy to see that
\begin{equation*}
\begin{aligned}
\tau' &= \frac{1}{3(2K - \varphi)} , \\
\tau'' &= \frac{1}{3 (2K - \varphi)^2} ,
\end{aligned}
\end{equation*}
and hence
\begin{equation*}
\tau'' - 2{\tau'}^2 = \frac{1}{9 (2K - \varphi)^2} = {\tau'}^2.
\end{equation*}
Suppose that $G$ attains its maximum at some point $\bm{p} \in B_r (\bm{0})$. 
For convenience, we choose a local chart near $\bm{p}$ such that $g_{ij} = \delta_{ij}$ and $X_{i j}$ is diagonal at $\bm{p}$. Without loss of generality, we may assume that $X_{11} \geq X_{22} \geq \cdots \geq X_{nn}$. Obviously, $\left[\mathfrak{F}^{ij}\right]$ is also diagonal. Therefore, at the point $\bm{p}$, we have
\begin{equation}
\label{gradient-derivative-1}
\frac{1}{2} \frac{\partial_i (|\nabla \varphi|^2)}{|\nabla \varphi|^2}  + \tau' \varphi_i + \frac{\rho_i}{\rho} = 0 ,
\end{equation}
and
\begin{equation}
\label{gradient-derivative-2}
\frac{\nabla^2_{\partial_i\partial_i} |\nabla \varphi|^2}{2 |\nabla \varphi|^2} - \frac{\partial_i |\nabla \varphi|^2 \partial_i |\nabla \varphi|^2}{2 |\nabla \varphi|^4} + \tau'' \varphi_i \varphi_i + \tau' \varphi_{ii} + \frac{\rho_{ii}}{\rho} - \frac{\rho_i \rho_i}{\rho^2} \leq 0 .
\end{equation}
Through an orthogonal transformation, we can also find a new local chart $\{y^1,y^2,\cdots,y^n\}$ such that
$
g\left(\partial_{y^i},\partial_{y^j}\right) = \delta_{ij}
$
and	
$
\varphi_{y^1} = |\nabla \varphi|
$. By \eqref{gradient-derivative-1},
\begin{equation*}
	\varphi_{y^1 y^i} = - |\nabla \varphi|\left(\tau' \varphi_{y^i} - \frac{2 d d_{y^i}}{r^2\rho}\right) .
\end{equation*}
In particular,
\begin{equation*}
	\varphi_{y^1 y^1} =  - \tau' |\nabla \varphi|^2 + \frac{2 d d_{y^1}}{r^2\rho} |\nabla \varphi| .
\end{equation*}
Assuming that at $\bm{p}$,
$$
|\nabla \varphi| \rho >   \sqrt{ 18 \sup_{B_r(\bm{0})} \sup_{|\bm{\xi}| = 1} \chi (\bm{\xi} , \bm{\xi}) K }  + \frac{3 6 K }{r}
$$
and hence
\begin{equation}
\label{term-1}
X_{nn}  \leq \chi_{y^1 y^1}  - \frac{1}{3 (2K - \varphi)} |\nabla \varphi|^2 + \frac{2 |\nabla \varphi|}{r \rho}
   \leq -  \frac{1}{18 K} |\nabla \varphi|^2 .
\end{equation}
Therefore, $\bm{\lambda} (X) \in \bar\Gamma \setminus \bar \Gamma^n$ at $\bm{p}$.

By direct calculation,
\begin{equation*}
	\nabla^2_{\partial_i\partial_i} \big(|\nabla \varphi|^2\big) = 2 \sum_{k} \varphi_{iik} \varphi_k + 2 \sum_{k,l} R_{ikil}  \varphi_k \varphi_l + 2 \sum_{k}  \varphi^2_{ki} .
\end{equation*}
Combining \eqref{gradient-derivative-1} and \eqref{gradient-derivative-2},
\begin{equation}
\label{gradient-derivative-3}
\begin{aligned}
	0 &\geq
	\frac{\nabla^2_{\partial_i\partial_i} |\nabla \varphi|^2}{2 |\nabla \varphi|^2} - \frac{\partial_i |\nabla \varphi|^2 \partial_i |\nabla \varphi|^2}{2 |\nabla \varphi|^4} + \tau'' \varphi^2_i + \tau' \varphi_{ii} + \frac{\rho_{ii}}{\rho} - \frac{\rho^2_i }{\rho^2} \\
	&\geq
	\frac{1}{ |\nabla \varphi|^2} \left(	 \sum_{k} \varphi_{iik} \varphi_k +  \sum_{k,l} R_{ikil}  \varphi_k \varphi_l +  \sum_{k}  \varphi^2_{ki} \right) + \tau' \varphi_{ii} + \frac{\rho_{ii}}{\rho} - \frac{7 \rho^2_i }{\rho^2} 	
	.
\end{aligned}
\end{equation}

There are two cases in consideration: (1) $\bm{\lambda} (\chi + \nabla^2 \varphi) \in \Gamma$ and (2) $\bm{\lambda} (\chi + \nabla^2 \varphi) \in \partial\Gamma$.

{\em Case  1:} $\bm{\lambda} (\chi + \nabla^2 \varphi) \in \Gamma$ .
Differentiating Equation~\eqref{equation-6-1},
\begin{equation*}
	\sum_{i,j} \mathfrak{F}^{ij} X_{ijk} = \frac{\partial \psi}{\partial t} \varphi_k +  d^H_{x^k} \psi + \sum_j \frac{\partial \psi}{\partial p_j} \varphi_{jk} .
\end{equation*}
Then at $\bm{p}$,
\begin{equation*}
\begin{aligned}
	\sum_{i} \mathfrak{F}^{ii} \nabla^2_{\partial_i\partial_i} \left(|\nabla \varphi|^2\right)
	&= 2   \frac{\partial \psi}{\partial t} |\nabla \varphi|^2 + 2 \sum_k   d^H_{x^k} \psi  \varphi_k +  \sum_{j} \frac{\partial \psi}{\partial p_j} \partial_j (|\nabla \varphi|^2) \\
	&\quad - 2 \sum_{i,k} \mathfrak{F}^{ii}\chi_{iik} \varphi_k + 2 \sum_{i,k,l} \mathfrak{F}^{ii} R_{ikil} \varphi_k \varphi_l + 2 \sum_{i,k} \mathfrak{F}^{ii} \varphi^2_{ki} 	
	.
\end{aligned}
\end{equation*}

Multiplying \eqref{gradient-derivative-3} by $\mathfrak{F}^{ii}$ and summing these terms over index $i$,
\begin{equation}
\label{inequality-1}
\begin{aligned}
	0
	&\geq
		  \frac{\partial \psi}{\partial t}  + \frac{1}{|\nabla \varphi|^2}  \sum_k   d^H_{x^k} \psi  \varphi_k + \frac{1}{2 |\nabla \varphi|^2}  \sum_{j} \frac{\partial \psi}{\partial p_j} \partial_j (|\nabla \varphi|^2) \\
			&\quad - \frac{1}{|\nabla \varphi|^2}  \sum_{i,k} \mathfrak{F}^{ii}\chi_{iik} \varphi_k + \frac{1}{|\nabla \varphi|^2}  \sum_{i,k,l} \mathfrak{F}^{ii} R_{ikil} \varphi_k \varphi_l + \frac{1}{|\nabla \varphi|^2} \sum_{i,k} \mathfrak{F}^{ii} \varphi^2_{ki} 	\\
			&\quad + \tau' \sum_i \mathfrak{F}^{ii} \varphi_{ii} + \sum_i \mathfrak{F}^{ii} \frac{\rho_{ii}}{\rho} - 7 \sum_i \mathfrak{F}^{ii} \frac{ \rho^2_i }{\rho^2}  \\
		&\geq
		  \frac{\partial \psi}{\partial t}  + \frac{1}{|\nabla \varphi|^2}  \sum_k   d^H_{x^k} \psi  \varphi_k - \tau' \sum_{j} \frac{\partial \psi}{\partial p_j} \varphi_j + 2 \sum_{j} \frac{\partial \psi}{\partial p_j} \frac{ d d_j}{ r^2\rho} - \frac{1}{|\nabla \varphi|^2}  \sum_{i,k} \mathfrak{F}^{ii}\chi_{iik} \varphi_k  \\
			&\quad + \frac{1}{|\nabla \varphi|^2}  \sum_{i,k,l} \mathfrak{F}^{ii} R_{ikil} \varphi_k \varphi_l + \frac{1}{2 |\nabla \varphi|^2} \sum_{i} \mathfrak{F}^{ii} X^2_{ii}  - \frac{1}{|\nabla \varphi|^2} \sum_{i,k} \mathfrak{F}^{ii}  \chi^2_{ki} 	 \\
			&\quad + \tau' \sum_i \mathfrak{F}^{ii} \varphi_{ii} + \sum_i \mathfrak{F}^{ii} \frac{\rho_{ii}}{\rho} - 7 \sum_i \mathfrak{F}^{ii} \frac{ \rho^2_i }{\rho^2} .
\end{aligned}
\end{equation}
Putting \eqref{inequality-1} and \eqref{term-1} together,
\begin{equation}
\label{inequality-2}
\begin{aligned}
	0
	&\geq
		  \frac{\partial \psi}{\partial t}  - \frac{|d^H \psi|}{|\nabla \varphi|}   - \tau' |\nabla \varphi| \left|\frac{\partial \psi}{\partial \omega}\right| -  \frac{  2}{ r\rho} \left| \frac{\partial \psi}{\partial \omega} \right| - \frac{1}{|\nabla \varphi|^2}  \sum_{i,k} \mathfrak{F}^{ii}\chi_{iik} \varphi_k  \\
			&\quad + \frac{1}{2 n |\nabla \varphi|^2} X^2_{nn}\sum_{i} \mathfrak{F}^{ii}  + \frac{1}{|\nabla \varphi|^2}  \sum_{i,k,l} \mathfrak{F}^{ii} R_{ikil} \varphi_k \varphi_l  - \frac{1}{|\nabla \varphi|^2} \sum_{i,k} \mathfrak{F}^{ii}  \chi^2_{ki} 	 \\
			&\quad + \tau' \sum_i \mathfrak{F}^{ii} X_{ii} -  \tau' \sum_i \mathfrak{F}^{ii} \chi_{ii}  + \sum_i \mathfrak{F}^{ii} \frac{\rho_{ii}}{\rho} - 7 \sum_i \mathfrak{F}^{ii} \frac{ \rho^2_i }{\rho^2} \\
	&\geq
		\frac{|\nabla \varphi|^2}{648 n K^2} \sum_{i} \mathfrak{F}^{ii}
		 - \left| \frac{\partial \psi}{\partial t} \right| \frac{L}{\epsilon} \sum_i \mathfrak{F}^{ii} - \frac{|d^H \psi|}{|\nabla \varphi|}   \frac{L}{\epsilon} \sum_i \mathfrak{F}^{ii} -  \frac{|\nabla \varphi|}{3K} \left|\frac{\partial \psi}{\partial \omega}\right|  \frac{L}{\epsilon} \sum_i \mathfrak{F}^{ii} \\
			&\quad -  \frac{  2}{ r\rho} \left| \frac{\partial \psi}{\partial \omega} \right|  \frac{L}{\epsilon} \sum_i \mathfrak{F}^{ii}  - \frac{|\nabla \chi|}{|\nabla \varphi|}  \sum_{i} \mathfrak{F}^{ii}  +  \inf_{\substack{|\bm{\xi}| = 1\\ |\bm{\eta}| = 1}} R_{\bm{\xi\eta\xi\eta}}\sum_{i} \mathfrak{F}^{ii}  - \frac{|\chi|^2}{|\nabla \varphi|^2} \sum_{i} \mathfrak{F}^{ii} 	 \\
			&\quad -  \frac{1}{3K}  \sup_{|\bm{\xi}| = 1} \chi (\bm{\xi} , \bm{\xi}) \sum_i \mathfrak{F}^{ii}  -  \frac{1}{r^2 \rho^2} \left(28 +  \sup_{|\bm{\xi}| = 1} \nabla^2 d^2 (\bm{\xi} , \bm{\xi})\right)\sum_i \mathfrak{F}^{ii}  .
\end{aligned}
\end{equation}
Dividing \eqref{inequality-2} by $\frac{1}{\rho^2 K^2}\sum_i \mathfrak{F}^{ii}$,
\begin{equation*}
\label{inequality-3}
\begin{aligned}
	0
	&\geq
		\frac{|\nabla \varphi|^2 \rho^2}{648 n }
		 - \frac{\left| \frac{\partial \psi}{\partial t} \right|}{|\nabla \varphi|^2} \frac{L K^2}{\epsilon} \rho^2 |\nabla \varphi|^2 - \frac{|d^H \psi|}{|\nabla \varphi|^3}   \frac{L K^2}{\epsilon}  \rho^2 |\nabla \varphi|^2  -    \frac{\left|\frac{\partial \psi}{\partial \omega}\right| }{|\nabla \varphi|} \frac{L K}{3 \epsilon} \rho^2 |\nabla \varphi|^2 \\
			&\quad -   \frac{\left| \frac{\partial \psi}{\partial \omega} \right| }{|\nabla \varphi|} \frac{2 L K^2}{r \epsilon}   \rho |\nabla \varphi| - \frac{|\nabla \chi|}{\rho |\nabla \varphi|} \rho^3 K^2 +   \Bigg\{\inf_{\substack{|\bm{\xi}| = 1\\ |\bm{\eta}| = 1}} R_{\bm{\xi\eta\xi\eta}} \Bigg\}^- K^2 - \frac{|\chi|^2}{\rho^2 |\nabla \varphi|^2}    K^2	 \\
			&\quad -  \frac{  K}{3}  \sup_{|\bm{\xi}| = 1} \chi (\bm{\xi} , \bm{\xi})    -  \frac{K^2}{r^2 } \left(28 +  \sup_{|\bm{\xi}| = 1} \nabla^2 d^2 (\bm{\xi} , \bm{\xi})\right)  .
\end{aligned}
\end{equation*}
So we can obtain the interior gradient estimate under assumption~\eqref{condition-6}.

%

{\em Case 2:} $\bm{\lambda} (\chi + \nabla^2 \varphi) \in \partial \Gamma$.
We consider Equation~\eqref{equivalent-equation} instead.

From \eqref{V-2},
\begin{equation*}
\begin{aligned}
	\sum_{i,j} \mathfrak{\tilde F}^{ij} (\chi + \nabla^2 \varphi + tg) g_{ij} > h' \circ \mathfrak{F} (\chi + \nabla^2 \varphi + tg) \frac{\epsilon}{L}
\end{aligned}
\end{equation*}
for $t> 0$. Letting $t \rightarrow 0+$,
\begin{equation*}
	\sum_{i,j} \mathfrak{\tilde F}^{ij} g_{ij} \geq h' (\psi) \frac{\epsilon}{L} .
\end{equation*}

Multiplying \eqref{gradient-derivative-3} by $\mathfrak{\tilde F}^{ii}$ and summing these terms over index $i$,
\begin{equation*}
\begin{aligned}
	0
		&\geq
			\frac{|\nabla \varphi|^2}{648 n K^2} \sum_{i} \mathfrak{\tilde F}^{ii}
			 - h' (\psi) \left| \frac{\partial \psi}{\partial t} \right|  - h'(\psi) \frac{|d^H \psi|}{|\nabla \varphi|}    -  h'(\psi) \frac{|\nabla \varphi|}{3K} \left|\frac{\partial \psi}{\partial \omega}\right|  -  \frac{  2 h'(\psi) }{ r\rho} \left| \frac{\partial \psi}{\partial \omega} \right|    \\
				&\quad  - \frac{|\nabla \chi|}{|\nabla \varphi|}  \sum_{i} \mathfrak{\tilde F}^{ii}  +  \inf_{\substack{|\bm{\xi}| = 1\\ |\bm{\eta}| = 1}} R_{\bm{\xi\eta\xi\eta}}\sum_{i} \mathfrak{\tilde F}^{ii}  - \frac{|\chi|^2}{|\nabla \varphi|^2} \sum_{i} \mathfrak{\tilde F}^{ii} -  \frac{1}{3K}  \sup_{|\bm{\xi}| = 1} \chi (\bm{\xi} , \bm{\xi}) \sum_i \mathfrak{\tilde F}^{ii}  	 \\
				&\quad -  \frac{1}{r^2 \rho^2} \left(28 +  \sup_{|\bm{\xi}| = 1} \nabla^2 d^2 (\bm{\xi} , \bm{\xi})\right)\sum_i  \mathfrak{\tilde F}^{ii}  .
\end{aligned}
\end{equation*}
If $h'(\psi) = 0$,
\begin{equation*}
\begin{aligned}
	0
		&\geq
			\frac{|\nabla \varphi|^2\rho^2 }{648 n }  - \frac{|\nabla \chi|\rho^2}{|\nabla \varphi|}    +  \rho^2 K^2 \Bigg\{ \inf_{\substack{|\bm{\xi}| = 1\\ |\bm{\eta}| = 1}} R_{\bm{\xi\eta\xi\eta}} \Bigg\}^- - \frac{\rho^2 |\chi|^2 K^2}{|\nabla \varphi|^2}  \\
			&\quad -  \frac{\rho^2 K }{3}  \sup_{|\bm{\xi}| = 1} \chi (\bm{\xi} , \bm{\xi}) 	 -  \frac{K^2}{r^2 } \left(28 +  \sup_{|\bm{\xi}| = 1} \nabla^2 d^2 (\bm{\xi} , \bm{\xi})\right) ;
\end{aligned}
\end{equation*}
if $h' (\psi) > 0$, we can again obtain \eqref{inequality-3}.

\end{proof}

\medskip
\subsection{Interior gradient estimate on $\mathbb{R}^n$}

A particular case of Equation~\eqref{equation-6-1} is
\begin{equation}
\label{equation-R}
\mathfrak{F} (D^2 \varphi) = \mathfrak{f} (\bm{\lambda} (D^2 \varphi)) = \psi (x , \varphi , D \varphi) .
\end{equation}
Caffarelli, Nirenberg and Spruck~\cite{CNSIII} established existence results on the corresponding Dirichlet problem in $\mathbb{R}^n$.
Chou and Wang~\cite{CW1} established the interior gradient estimates for $k$-Hessian equations (see also \cite{Wang2009}), and Chen~\cite{Chen2015} proved similar results for Hessian quotient equations.
\begin{theorem}
\label{theorem-R}
Suppose that $\varphi \in C^3 (B_r (\bm{0}))$ is a solution to Equation~\eqref{equation-R} such that
\begin{equation*}
\label{condition-7}
	|D_{\bm{x}} \psi| + |\psi_\varphi|~|\bm{p}| + |D_{\bm{p}} \psi|~|\bm{p}|^2 \leq  h(|\bm{p}|^3) \quad \text{as } |\bm{p}| \rightarrow +\infty,
\end{equation*}
where $h (t) = o(t)$ as $t \rightarrow +\infty$. Then we have
\begin{equation*}
	|D \varphi (\bm{0})| \leq C ,
\end{equation*}
where $C$ depends on $\epsilon$, $L$, $r$ and $\sup_{B_r(\bm{0})} |\varphi (\bm{x})|$.

In particular, if $\psi$ is constant,
\begin{equation*}
	|D \varphi (\bm{0})| \leq C \frac{\sup_{B_r(\bm{0})} |\varphi|}{r} .
\end{equation*}
\end{theorem}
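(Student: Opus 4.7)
The plan is to deduce Theorem~\ref{theorem-R} as a direct specialization of Theorem~\ref{Theorem-6-1}, taking $(M,g)$ to be $\mathbb{R}^n$ equipped with the Euclidean metric and $\chi \equiv 0$. Since the injectivity radius and distance to the boundary are both infinite, every $r > 0$ is admissible. In Cartesian coordinates the Christoffel symbols vanish, so the horizontal derivative reduces to $d^H\psi = D_{\bm{x}}\psi$; correspondingly $\partial\psi/\partial\omega$ becomes $D_{\bm{p}}\psi$ and $\partial\psi/\partial t$ becomes $\psi_\varphi$. The hypothesis of Theorem~\ref{Theorem-6-1} then matches verbatim the growth assumption stated here. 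Moreover, the Riemann tensor, $|\chi|$, $|\nabla\chi|$, and $\sup\chi(\bm{\xi},\bm{\xi})$ all vanish, while $\sup_{|\bm{\xi}|=1}\nabla^2 d^2(\bm{\xi},\bm{\xi}) = 2$, so every geometric constant collapses to a purely dimensional one. This immediately yields the first bound $|D\varphi(\bm{0})| \leq C$ with $C$ depending only on $\epsilon$, $L$, $r$, and $\sup_{B_r(\bm{0})}|\varphi|$.

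For the sharper estimate when $\psi$ is constant, I would proceed by a rescaling argument. Setting $\tilde\varphi(\bm{y}) := r^{-2}\varphi(r\bm{y})$ on $B_1(\bm{0})$ gives $D^2\tilde\varphi(\bm{y}) = D^2\varphi(r\bm{y})$, so $\mathfrak{F}(D^2\tilde\varphi) = \psi$ remains the same constant equation on the unit ball and the structural constants $\epsilon, L$ are unchanged. It therefore suffices to establish that in the special case $r = 1$ with constant $\psi$, the gradient bound is linear in $\sup_{B_1(\bm{0})}|\tilde\varphi|$; the full claim then follows from the scaling identities $|D\tilde\varphi(\bm{0})| = r^{-1}|D\varphi(\bm{0})|$ and $\sup_{B_1}|\tilde\varphi| = r^{-2}\sup_{B_r}|\varphi|$.

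The main technical point is therefore to revisit the proof of Theorem~\ref{Theorem-6-1} in this simplified setting and track the dependence on $K := \sup_{B_1(\bm{0})}|\tilde\varphi|$ explicitly. Since $\psi$ is constant and $\chi = 0$, every term on the right-hand side of the central inequality that involves $D\psi$, $\chi$, $\nabla\chi$, or curvature simply drops out, leaving at the maximum point $\bm{p}$ of the auxiliary function $G(\bm{x}) = \tfrac{1}{2}\ln|\nabla\tilde\varphi|^2 + \tau(\tilde\varphi) + \ln\rho(\bm{x})$ the clean inequality $|\nabla\tilde\varphi(\bm{p})|^2 \rho(\bm{p})^2 \leq C_0 K^2$. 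With $\tau(t) = -\tfrac{1}{3}\ln(2K - t)$, a direct computation shows $\tau_{\max} - \tau_{\min} \leq \tfrac{1}{3}\ln 3$, an absolute constant independent of $K$. Comparing $G(\bm{0})$ to $G(\bm{p})$ then transfers the bound to the origin without any $K$-dependent prefactor, yielding $|D\tilde\varphi(\bm{0})| \leq CK$, which after rescaling produces the claimed estimate $|D\varphi(\bm{0})| \leq C\sup_{B_r(\bm{0})}|\varphi|/r$.
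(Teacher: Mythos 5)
Your proof is correct and in substance follows the same maximum-principle argument as the paper, which simply re-runs the Riemannian computation with all geometric terms dropped rather than invoking Theorem~\ref{Theorem-6-1} by reference. Where you genuinely diverge is the constant-$\psi$ estimate: the paper tracks the constants explicitly for general $r$ and reads off
\[
\frac{30 K^2}{r^2} \geq \frac{|D\varphi|^2\rho^2}{324 n}, \qquad\text{hence}\qquad |D\varphi|\rho \leq 99\sqrt{n}\,\frac{K}{r},
\]
then transfers to the origin via the $O(1)$ oscillation of $\tau$, whereas you insert a rescaling $\tilde\varphi(\bm{y}) = r^{-2}\varphi(r\bm{y})$ to normalize $r = 1$, derive linear-in-$K$ dependence there, and scale back. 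Both routes are sound; the rescaling is a slight detour, since after normalization you still need to re-derive the constant-$\psi$ inequality and track $K$, which is exactly the computation the paper does once for general $r$. The rescaling does buy a conceptual clarification --- it makes manifest that the $\sup|\varphi|/r$ form of the bound is forced by the scaling invariance of the equation $\mathfrak{F}(D^2\varphi) = \text{const}$ --- but it is not needed for the proof. One small point worth stating explicitly in your argument: when you specialize Theorem~\ref{Theorem-6-1} with $\chi \equiv 0$, you should note that Conditions (i)--(vi) of Section~\ref{Interior gradient estimate} impose requirements only on $\mathfrak{f}$ and $\psi$, not on $\chi$, so that $\chi \equiv 0$ is admissible; the standing assumption from the introduction that $\bm{\lambda}(\chi - \sigma g) \in \Gamma$ is not used in Section~\ref{Interior gradient estimate} and would fail for $\chi \equiv 0$.
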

\begin{remark}
 With Theorem~\ref{theorem-R}, a Liouville type result can be achieved as in \cite{CW1}.
\end{remark}

\begin{proof}

We shall consider the function
\begin{equation*}
\label{R-test-function}
	G(\bm{x}) := \frac{1}{2} \ln |D \varphi|^2 + \tau (\varphi) + \ln \rho (\bm{x}) ,
\end{equation*}
where
\begin{equation*}
\tau (\varphi) = - \frac{1}{3} \ln (2K - \varphi), \qquad \text{for } K = \sup_{B_r(\bm{0})} |\varphi| ,
\end{equation*}
and
\begin{equation*}
\rho(x) = \left(1 - \frac{|\bm{x}|^2}{r^2}\right)^+.
\end{equation*}
Suppose that $G$ attains its maximal value at some point $\bm{p} \in B_r (\bm{0})$. For convenience, we choose a set of coordinates $\bm{x} = (x^1, \cdots, x^n)$ so that $D^2 \varphi$ is diagonal at $\bm{p}$ and $\varphi_{11} \geq \cdots \geq \varphi_{nn}$. Therefore, at point $\bm{p}$,
\begin{equation}
\label{equality-R-1}
\frac{1}{2} \frac{(|D \varphi|^2)_i}{|D \varphi|^2} + \tau' \varphi_i + \frac{\rho_i}{\rho} = 0 ,
\end{equation}
and
\begin{equation}
\label{inequality-R-1}
\frac{(|D \varphi|^2)_{ii}}{2 |D \varphi|^2} - \frac{(|D \varphi|^2)_i(|D \varphi|^2)_i}{2 |D \varphi|^4} + \tau'' \varphi_i \varphi_i + \tau' \varphi_{ii} + \frac{\rho_{ii}}{\rho} - \frac{\rho_i \rho_i}{\rho^2} \leq 0.
\end{equation}
From \eqref{equality-R-1}, in another set of coordinates $\bm{y} = (y^1,\cdots,y^n)$, $\varphi_{y^1} = |D \varphi|$ and
\begin{equation*}
	\varphi_{y^1y^i} = - |D \varphi| \left(\tau' \varphi_{y^i} - \frac{2 y_i}{r^2 \rho}\right) .
\end{equation*}
We assume that at point $\bm{p}$, $|D \varphi| \rho > \frac{36 K}{r}$. Then,
\begin{equation}
\label{inequality-R-5}
\begin{aligned}
	\varphi_{nn} \leq - \tau' |D \varphi|^2 + \frac{2 y_1 |D \varphi|}{r^2 \rho} \leq - \frac{|D \varphi|^2}{18 K} .
\end{aligned}
\end{equation}
There are two cases to deal with.

{\em Case 1:} $\bm{\lambda} (D^2 \varphi) \in \Gamma$.
From \eqref{inequality-R-1},
\begin{equation}
\label{inequality-R-2}
\begin{aligned}
	0 &\geq\frac{1}{2 |D \varphi|^2}  \sum_i \mathfrak{F}^{ii} (|D \varphi|^2)_{ii} -\frac{1}{2 |D \varphi|^4} \sum_i \mathfrak{F}^{ii} (|D \varphi|^2)_i(|D \varphi|^2)_i \\
	&\quad + \tau'' \sum_i \mathfrak{F}^{ii} \varphi_i \varphi_i + \tau' \sum_i \mathfrak{F}^{ii} \varphi_{ii} + \frac{1}{\rho} \sum_i \mathfrak{F}^{ii} \rho_{ii} - \frac{1}{\rho^2}  \sum_i \mathfrak{F}^{ii} \rho_i \rho_i \\
	&\geq- \frac{|D_{\bm{x}}  \psi|}{|D \varphi|}  + \psi_\varphi - \frac{1}{3K} |D \varphi| |D_{\bm{p}} \psi| - \frac{2}{r\rho}|D_{\bm{p} } \psi|  + \frac{\varphi^2_{nn}}{n |D \varphi|^2}  \sum_i \mathfrak{F}^{ii}   - \frac{30}{r^2 \rho^2}  \sum_i \mathfrak{F}^{ii}    .
\end{aligned}
\end{equation}
Substituting \eqref{inequality-R-5} into \eqref{inequality-R-2},
\begin{equation*}
\label{inequality-R-6}
\begin{aligned}
	0 &\geq - \frac{|D_{\bm{x}}  \psi|}{|D \varphi|}   + \psi_\varphi - \frac{1}{3K} |D \varphi| |D_{\bm{p}} \psi| - \frac{2}{r\rho}|D_{\bm{p} } \psi|  + \frac{|D \varphi|^2}{324 n K^2}  \sum_i \mathfrak{F}^{ii}   - \frac{30}{r^2 \rho^2}  \sum_i \mathfrak{F}^{ii}  .
\end{aligned}
\end{equation*}
By Inequality~\eqref{V-2},
\begin{equation}
\label{inequality-R-8}
\begin{aligned}
	0 &\geq - \frac{|D_{\bm{x}}  \psi|}{|D \varphi|} \frac{L}{\epsilon} \sum_i \mathfrak{F}^{ii}  - | \psi_\varphi |\frac{L}{\epsilon} \sum_i \mathfrak{F}^{ii}  - \frac{1}{3K} |D \varphi| |D_{\bm{p}} \psi| \frac{L}{\epsilon} \sum_i \mathfrak{\mathfrak{F}}^{ii}  - \frac{2}{r\rho}|D_{\bm{p} } \psi| \frac{L}{\epsilon} \sum_i \mathfrak{\mathfrak{F}}^{ii}  \\
	&\quad + \frac{|D \varphi|^2}{324 n K^2}  \sum_i \mathfrak{F}^{ii}   - \frac{30}{r^2 \rho^2}  \sum_i \mathfrak{F}^{ii} .
\end{aligned}
\end{equation}
Dividing \eqref{inequality-R-8} by $\frac{\sum_i \mathfrak{F}^{ii}}{\rho^2 K^2}$,
\begin{equation}
\label{inequality-5}
\begin{aligned}
	\frac{| \psi_\varphi | }{|D \varphi|^2} \frac{  L K^2 }{\epsilon} |D \varphi|^2 \rho^2  +\frac{ |D_{\bm{p}} \psi| }{|D \varphi|} \frac{K L}{3 \epsilon} |D \varphi|^2 \rho^2 + \frac{ |D_{\bm{p} } \psi|}{|D \varphi|} \frac{2 L K^2}{r \epsilon} |D \varphi| \rho \\
	 + \frac{|D_{\bm{x}}  \psi|}{|D \varphi|^3} \frac{L K^2}{\epsilon} |D \varphi|^2 \rho^2 + \frac{30 K^2}{r^2 }  \geq \frac{|D \varphi|^2 \rho^2}{324 n}     .
\end{aligned}
\end{equation}
So we are able to obtain the interior gradient estimate under \eqref{condition-7}.

{\em Case 2:} $\bm{\lambda} (D^2 \varphi) \in \partial \Gamma$.
We consider an equivalent equation
\begin{equation*}
	\mathfrak{\tilde F} (D^2 \varphi) = h \circ \psi (x , \varphi , D \varphi) .
\end{equation*}
Multiplying \eqref{gradient-derivative-3} by $\mathfrak{\tilde F}^{ii}$ and summing these terms over index $i$,
\begin{equation*}
\label{inequality-R-9}
	0
	\geq
	- \frac{h' |D_{\bm{x}}  \psi|}{|D \varphi|}  + h' \psi_\varphi - \frac{h'}{3K} |D \varphi| |D_{\bm{p}} \psi| - \frac{2 h'}{r\rho}|D_{\bm{p} } \psi|  + \frac{\varphi^2_{nn}}{n |D \varphi|^2}  \sum_i \mathfrak{\tilde F}^{ii}   - \frac{30}{r^2 \rho^2}  \sum_i \mathfrak{\tilde F}^{ii}    .
\end{equation*}
If $h'(\psi) = 0$,
\begin{equation*}
	\frac{30}{r^2}
	\geq
	 \frac{|D \varphi|^2 \rho^2 }{324 n K^2}        ;
\end{equation*}
if $h' (\psi) > 0$, we can again obtain \eqref{inequality-5}.

{\em A particular case:} $\psi$ is constant.
From \eqref{inequality-R-2} and \eqref{inequality-R-9}, we have
\begin{equation*}
	\frac{30 K^2}{r^2} \geq \frac{|D \varphi|^2\rho^2}{324 n} ,
\end{equation*}
and hence
\begin{equation*}
\label{inequality-R-7}
	|D \varphi|\rho \leq 99 \sqrt{n}\frac{K}{r} .
\end{equation*}

\end{proof}

\medskip
\subsection{Lipschitz estimate}

If $e^f$ is locally Lipschitz in a open set $U \subset M$, then we can find out a  sequence $\{e^{f_i}\}$ of positive smooth functions such that $e^{f_i} \to e^f$ both in $L^\infty$  and  $C^{0,1}_{loc} (U)$.

To apply the interior gradient estimate, we just need to ensure that Condition~(iv) holds true. If we further assume that \eqref{condition-5-32} holds,
then by concavity
\begin{equation}
(t - 1) \sum_i \frac{\partial \mathfrak{f}}{\partial\lambda_i} (\bm{\lambda}) \lambda_i
\geq
\mathfrak{f} (t \bm{ \lambda}) - \mathfrak{f} (\bm{\lambda})
>
- \mathfrak{f} (\bm{\lambda}) .
\end{equation}
Letting $t \to +\infty$,
\begin{equation}
\label{inequality-5-34}
\sum_i \frac{\partial \mathfrak{f}}{\partial\lambda_i} (\bm{\lambda}) \lambda_i \geq 0 .
\end{equation}
%
%
%
So we can obtain a local Lipschitz bound for some weak solutions.
Therefore, Theorem \ref{Lipschitz solution} is proved.

Indeed, we can obtain a uniform Lipschitz estimate for approximation equations when $e^f$ is Lipschitz. In this case, we can contruct the viscosity solution pair $(\varphi,b)$ as in \cite{SuiSun2022}, without using the stability estimate.

\medskip

\noindent
{\bf Acknowledgements}\quad
The author wish to thank Chengjian Yao, Bo Guan  for discussions and comments.
The first author is supported by National Natural Science Foundation of China (No. 12001138).
The second author is supported by
National Natural Science Foundation of China (No. 12371207)
and
a start-up grant from ShanghaiTech University.

\medskip

\end{document}